\setlist[itemize]{noitemsep} 
\renewcommand\thesection{\Roman{section}} 
\renewcommand\thesubsection{\roman{subsection}} 
\titleformat{\section}[block]{\large\scshape\centering}{\thesection.}{1em}{} 
\titleformat{\subsection}[block]{\large}{\thesubsection.}{1em}{} 
\newcommand{\R}{\mathbb{R}}
\newcommand{\E}{\mathbb{E}}
\newcommand{\1}{\mathbb{I}}
\renewcommand{\P}{\mathbb{P}}
\newcommand{\Z}{\mathbb{Z}}
\newcommand{\N}{\mathbb{N}}
\newcommand{\T}{\mathbb{T}}
\newcommand{\C}{\mathbb{C}}
\newcommand{\Cc}{\mathcal{C}}
\newcommand{\B}{\mathcal{B}}
\newtheorem{thm}{Theorem}
\newtheorem{prop}[thm]{Proposition}
\newtheorem*{Pro*}{Proposition}
\newtheorem*{que*}{Question}
\newtheorem{Question}{Question}
\newtheorem{rem}[thm]{Remark}
\newtheorem{Cor}[thm]{Corollary}
\newtheorem{fact}[thm]{Fact}
\newtheorem*{thank}{\ \ \ \bf Acknowledgment}
\newcommand{\tend}[3][]{\xrightarrow[#2\to#3]{#1}}
\newcommand{\mob}{\boldsymbol{\mu}}
\newcommand{\lamob}{\boldsymbol{\lambda}}
\newcommand{\bfu}{\boldsymbol{f}}
\newcommand{\bh}{\boldsymbol{h}}
\newcommand{\bb}{\boldsymbol{b}}
\newcommand{\bc}{\boldsymbol{c}}
\newcommand{\aaa}{\boldsymbol{a}}
\title{Oscillating sequences, Gowers norms and Sarnak's conjecture} 
\author{%
\textsc{e. H. el Abdalaoui}\thanks{Department of Mathematics, LMRS  UMR 6085 CNRS, Avenue de l'Universit\'e, BP.12,
76801 Saint Etienne du Rouvray - France.} \\[1ex] 
\normalsize Normandy University of Rouen,\\ 
\normalsize \href{elhoucein.elabdalaoui@univ-rouen.fr}{elhoucein.elabdalaoui@univ-rouen.fr} 
}
\date{} 
\begin{document}

\maketitle


\section{Setting and tools}
 Following \cite{Fan},\cite{Jiang}, \cite{Fan}\footnote{When this paper was almost ready, the author received a email from A. 
 Fan with attached paper \cite{Fan2} in which it is proved 
that the oscillating sequences of order $d=t.s$ is orthogonal to any dynamical sequence 
of the from 
$\Big(F(T^{q_1(n)}x,\cdots,T^{q_k(n)}x\Big)$, where $q_i(n)$, $i=1,\cdots,k$ are a polynomials of degeree at most $s$ and
$T$ is a map with topological quasi-discrete spectrum of order $t$. Here, we will mention and deduce this result from our result (see
Theorem \ref{Fan2}).
} the sequence  $\mathbf{c}=(c_n)$ is said to be oscillating sequence of order $k$ ($k \geq 1$) if 
 \begin{eqnarray}\label{l1}
  \sum_{n=1}^{N}|c_n|^\lambda=O(N)~~~~~~~~~~~~\textrm{for~~some~~} \lambda \geq 1,
 \end{eqnarray}
and for any real polynomial $P \in \R_k[z]$  of degree less than or equal to $k$ we have 
\begin{eqnarray}\label{l2}
 \frac{1}{N} \sum_{n=1}^{N}c_n e^{2 \pi i P(n)} \tend{N}{+\infty}0.
 \end{eqnarray}
The sequence is said to be higher order oscillating sequence if it is an oscillating sequence of any  higher order.\\
 
The authors in \cite{Fan} introduced this notion in order to extend the study of the M\"{o}bius-Liouville randomness law to the class of 
oscillating sequences. They propose to replace the M\"{o}bius function by any higher order oscillating sequence in 
the M\"{o}bius-Liouville randomness law. 
But, as we will establish here, there is an oscillating sequence of higher order which is not orthogonal to the class of 
dynamical flow with topological entropy zero. The authors therein gives also an example 
of higher order oscillating sequences by appealing to the deep classical result of
Kahane on a subnormal random independent variables \cite{kahane}.\\

Here, we strengthen their result by showing that
if $(X_n)$ is a sequence of independent random variables with common mean 
zero and uniformly $L^p$-norm bounded for some $p>2$ then almost surely the sequence $X_n(\omega)$ is higher order oscillating.
Consequently, the sequence $(X_n(\omega))$ is almost surely higher order oscillating sequence if $X_n$ is subnormal, for each $n$.\\

We remind that the M\"{o}bius-Liouville randomness law \cite{Kowalski} 
assert that for any ``reasonable" sequence of complex numbers $(a_n)$ we have
\begin{eqnarray}\label{rlaw1}
\frac{1}{N}\sum_{n=1}^{N}\lamob(n) a_n \tend{N}{+\infty}0,
\end{eqnarray}
where $\lamob$ is the Liouville function given by 
\begin{equation*}\label{Liouville}
\lamob(n)= \begin{cases}
 1 {\rm {~if~}} n=1; \\
(-1)^r  {\rm {~if~}} n
{\rm {~is~the~product~of~}} r \\{\rm {~not~necessarily~distinct~prime~numbers}}. 
\end{cases}
\end{equation*}

Applying Chowla-Batman trick \cite{Chowla-Batman}, the \linebreak Liouville function can be replaced in \eqref{rlaw1} by 
the M\"{o}bius function $\mob$. We remind that the M\"{o}bius function is defined by 

\begin{equation*}\label{Mobius}
\mob(n)= \begin{cases}
 \lamob(n) {\rm {~if~}} n \rm{~is~not~divisible~by~}\\
 \rm{the~square~of~any~ prime}; \\
0  {\rm {~~~~if~not.}}
\end{cases}
\end{equation*}

In his seminal paper \cite{Sarnak}, P. Sarnak consider  
the M\"{o}bius-Liouville randomness law for a class of deterministic sequences which arise from topological dynamical system with topological entropy 
zero. Precisely, the sequence $(a_n)$ is  given by 
$a_n=f(T^nx)$, for any $n \geq 1$, where $T$ is homeomorphism acting on a compact space $X$ with topological entropy zero, 
$f$ is a continuous function on $X$ and $x$ a point in $X$.\\

This is nowadays known as Sarnak's conjecture. At now, as far as the author is aware, 
this conjecture was established only for many particular case of zero topological entropy
dynamical systems (see \cite{Tao-Log} and the reference therein, see also \cite{Hou-Survey}).\\

In particular, Liu \& P. Sarnak proved that Sarnak's conjecture holds for an affine linear map of nilmanifold \cite{Liu-Sarnak}  
by applying a slightly strengthen version of Green-tao's theorem \cite{Green-Tao}
combined with a classical result from \cite{Dani}.\\

Here, we will apply same classical ingredients to establish that 
the higher order oscillating sequences are orthogonal to the wide class of nilsequences and to the affine linear maps on the Abelian group. 
Indeed, our proof yields that
the higher order oscillating sequences are orthogonal to any dynamical sequence $(f(T^nx))$ provided that $T$ has a quasi-discrete spectrum. 

At this point one may ask if the previous result can be extended to the all nilsequences. Since, as pointed out by W. parry \cite{Parry},
''the nilflows and nilmanifold unipotent affines should be viewed as models generalizing the models defining quasi-discrete spectra''
.\\

We answer this question by establishing that there is an almost nilsequence which is higher order oscillating. It follows that 
there is a higher order oscillating  sequence with high Gowers norms. We thus get that the notion of 
higher order oscillating sequence is not adapted to generalize the spirit of the  M\"{o}bius-Liouville randomness law 
and Sarnak's conjecture.

Although, the orthogonality of the higher order oscillating sequences and the quasi-discrete spectrum is in the spirit of Liu \& P. Sarnak's result, 
since the M\"obius function is higher order oscillating sequence by 
Hua's theorem \cite{Hua}. Of Course, in the particular case of $G=\R^d$ and $\Gamma=\Z^d$, the proof yields that 
the oscillating sequence of order 
$d$ are orthogonal to the standard homogeneous space $(\T^d,T)$, where $T$ is an affine map.  We thus get the result of Jiang \cite{Jiang}.

We remind that the dynamical system $(X,\B,\mu,T)$ is said to have a measurable quasi-discrete spectra if the closed linear subspace spanned by 
$H=\bigcup_{n \geq 0} H_n$ is all $L^2(X,\mu)$, where  $H_0$ is the set of the constant complex valued function of modulus $1$, and for any 
$n \geq 1$, $H_n=\big\{f \in L^2(X,\mu)~~:~~|f|=1{\rm{~~a.e. and}}~~\frac{f \circ T}{f} \in  H_{n-1}\big\}.$ If
for some $d \geq 1$,  $H_d=H_{d+1}$ we say that $T$ has a discrete-spectrum of order $d$.\\

This class was defined and studied by L. M. Abramov \cite{Abramov}. 
Subsequently, F. Hahn \& W. Parry introduced and studied the notion of quasi-discrete spectrum in the topological dynamics 
for a homeomorphism $T$ of a compact set $X$ that is assumed to be completely minimal (that is, all its powers are minimal)
\cite{Parry1}, \cite{Parry2}. 
The quasi-eigen-functions are assumed to be continuous and separate the points of $X$. Therefore, by the Stone-Weierstrass theorem, 
the subalgebra generated by the quasi-eigen-functions is dense in $C(X)$. Ten years later, R. J. Zimmer shows that a 
totally ergodic system $(X,\B,T,\mu)$ has quasi-discrete spectrum if and only if it is distal and isomorphic to a totally 
ergodic affine transformation on a compact connected Abelian group $(G,S)$, that is,  $S~~:~~G \rightarrow G$ has the form 
$Sx=x_0Ax,$ where $A~~:~~G \rightarrow G$ is an automorphism of the group $G$ and $x_0 \in G$ \cite{Zimmer}.\\

Applying some algebraic arguments, one can define for any $n \geq 1$ the subgroup $G_n=\ker(\Lambda^n)$ where $\lambda$ 
is the derived homomorphism on the multiplicative
group $C(X,\T)=\big\{ f \in C(X)~~:~~|f|=1\big\}$ given by $\lambda(f)=\frac{f\circ T}{f}=f\circ T. \overline{f}$ and
$G_0=\{1\}$.
Therefore $G=\bigcup_{n \geq 1}G_n$ is an Abelian group and $\Lambda$ is a quasi-nilpotent homomorphism on it. We remind that 
$\Lambda$ is called nilpotent if $G=G_n$ for some $n$ and quasi-nilpotent if $G=\bigcup_{n \geq 1}G_n$. Notice also that 
the subspace of the invariant continuous functions is  the subspace of the constant functions $\C.G_0$ by minimality. We further have 
by the binomial theorem{\footnote{There is an analogy between this formula and Hall-Petresco  identity for the nilpotent groups (see \cite[p.118]{TaoHfourier}.}
\begin{eqnarray}\label{binomial}
 f\circ T^n= \prod_{j=0}^{n}\big(\Lambda^j(f)\big)^{\binom{n}{j}}
\end{eqnarray}
for each $f \in C(X,\T)$, where the binomial coefficients $\binom{n}{j}$ are defined by 
$$\binom{n}{j}=\begin{cases}
                \frac{n(n-1)\cdots(n-j+1)}{j!} {\rm {~if~}} 0 \leq j \leq n\\
                0{\rm {~if~not}} 
               \end{cases}
$$
The elements of $G_n$ are called quasi-eigenvectors of order $n-1$ and $G$ is the group of all quasi-eigenvectors. By putting
$H_n=\Lambda(G_{n+1})$, we see that the elements of $G_n$ are precisely the unimodular solutions $f$ of the equation
$\Lambda(f)=g$, where $g \in G_{n-1}$. The elements of the subgroup $H_n$ are called a quasi-eigenvalue of order $n-1$ and
$$H=\bigcup_{n \geq 0}H_n,$$
is the group of all quasi-eigenvalues. Obviously, $\iota~~:f \in G_1 \mapsto f(x_0) \in \T$ where $x_0 \in X$ is an
isomorphism of groups. A triple $(H,\Lambda,\iota)$ is called the signature of the dynamical system $(X,T)$. 
According to Hahn-Parry's Theorem \cite{Hahn-Parry}, if $(X,T)$ is totally minimal topological system with quasi-discrete spectrum
and signature $(H,\Lambda,\iota)$, then $(X,T)$ is isomorphic to the affine automorphism system $(\widehat{H},\phi^*,\eta)$ where 
$\widehat{H}$ is the dual group of $H$, $\phi(h)=h\Lambda(h)$, for $h \in H$, and $\eta$ denotes any homomorphic extension of 
$\eta : H_1 \longrightarrow \T$ to all of $H$.\\

The popular example of maps with quasi-discrete spectrum is given by the following transformation of
the $d$-dimensional torus of the form
\[
  T(x_1,\ldots,x_d)=(x_1+\alpha, x_2+x_1, \ldots, x_{d}+x_{d-1}).
\]
This transformation is an affine transformation, it can be written as $x\mapsto Ax+b$ where $A=[a_{ij}]_{i,j=1}^d$ is the matrix 
defined by $a_{1,1}:=1, a_{i-1,i}=a_{ii}:=1,~~i=2,\cdots,d$ and all other coefficients equal to zero, and $b:=(\alpha,0,\ldots,0)$.
Taking again $\alpha$ irrational, $(\T^d,T)$ is a uniquely ergodic dynamical system, and it is totally ergodic with respect to the 
Haar measure on $\T^d$, which is the unique invariant measure \cite{Fu}. More generally, H. Hoare and W. Parry established that 
if $T$ is a minimal affine transformation of a compact connected abelian group $X$, that is, $T(x)=a.A(x),~~x \in X$, 
where $A$ is an automorphism of $X$ and $a \in X$ then $T$ has quasi-discrete spectrum \cite{Hoare-Parry}. 
For a recent exposition and analysis of the subject, we refer the reader to
\cite{Haas}.\\

Let us further remind that the authors in \cite{Ab-Ka-Le} proved that Sarnak's conjecture holds for any 
uniquely ergodic model of a dynamical system with quasi-discrete spectrum. The proof is based on the joining property of the the powers called Asymptotical Orthogonal Powers (AOP). 
This later property insure  that Katai-Bourgain-Sarnak-Ziegler criterion holds.\\

We end this section by pointing out that  the M\"{o}bius-Liouville randomness law can be seen as a weaker version of the 
following notion of independence introduced by Rauzy in \cite{Rauzy}.\\

Let $X,Y$ be two metric spaces, we say that the sequence 
$(x_n) \subset X $ and $(y_n) \subset X $ are independent if for any continuous functions $f \in C(X)$ and 
$g \in C(Y)$ we have 
\begin{eqnarray*} 
&&\Big|\frac{1}{N}\sum_{n=0}^{N-1}f(x_n)g(y_n)\\
&&-\Big(\frac{1}{N}\sum_{n=0}^{N-1}f(x_n)\Big)
\Big(\frac{1}{N}\sum_{n=0}^{N-1}g(y_n)\Big)\Big|\tend{N}{+\infty}0.
\end{eqnarray*}


\section{the main results}

We start by stating our first main result.

\begin{thm}\label{Mainofmain}
There exist a dynamical system $(X,T)$ with topological entropy zero and a higher order oscillating sequence which is not orthogonal
to $(X,T)$
\end{thm}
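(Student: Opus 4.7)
My approach is to realise a higher-order oscillating sequence as an orbit of a higher-step nilrotation. The guiding intuition is that a minimal nilrotation of step $k\ge 2$ produces genuine bracket-polynomial nilsequences, qualitatively distinct from ordinary polynomial phases, so with a careful choice of observable one can arrange orthogonality to every $e^{2\pi iP(n)}$ while keeping the orbit self-correlating. Since every nilsystem has vanishing topological entropy, this would produce the desired counterexample.

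Concretely, I would fix a minimal nilrotation $(X,T)=(G/\Gamma,T_g)$ on a $2$-step (Heisenberg) nilmanifold, with $g$ chosen so that the induced rotation on the maximal torus factor is totally irrational; this makes $(X,T)$ uniquely ergodic with zero topological entropy. I would then pick a nonzero continuous $F\in C(X)$ whose nilmanifold Fourier expansion has no horizontal (abelian) component --- for instance a non-trivial vertical-character lift of a smooth function on the central fibre --- and set $c_n := F(T^n x_0)$ for a generic $x_0\in X$. Unique ergodicity and Birkhoff immediately give
$$\frac{1}{N}\sum_{n=1}^{N} c_n\,\overline{F}(T^n x_0) \longrightarrow \int_X |F|^2\,d\mu > 0,$$
so $c_n$ is not orthogonal to $(X,T)$ via the continuous observable $\overline{F}$; the heart of the matter is thus verifying higher-order oscillation.

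To prove that $\frac{1}{N}\sum_{n=1}^{N} c_n\,e^{2\pi i P(n)}\to 0$ for every real polynomial $P$, I would appeal to Leibman's polynomial equidistribution theorem for nilmanifolds applied to the joint polynomial orbit $n\mapsto (T^n x_0, e^{2\pi i P(n)})$ on the product nilmanifold $X\times \T$. Leibman's criterion identifies the closure of this orbit as a sub-nilmanifold of $X\times \T$, and for our choice of $F$ (no horizontal modes) combined with the total irrationality of $g$, the integral of $F(x)\,e^{2\pi i P(\cdot)}$ over that sub-nilmanifold vanishes, giving the required orthogonality.

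The principal obstacle lies in handling polynomial phases $P$ of degree strictly larger than the step of $X$: such $P$ naturally live on taller nilmanifolds, and the joint orbit on $X\times \T$ can concentrate on unexpected sub-nilmanifolds, so Leibman's criterion must be exploited quantitatively, uniformly in $P$ and in the degree. Should the direct analytic verification prove delicate, a fallback is the probabilistic construction alluded to in the introduction: perturb $F(T^n x_0)$ by $\varepsilon X_n(\omega)$ with $\varepsilon$ small and $X_n$ independent mean-zero subnormal variables. The strengthened Kahane-type theorem stated after \eqref{l2} ensures that the perturbation is almost surely higher-order oscillating, which absorbs any residual polynomial correlation while leaving the self-correlation with $\overline{F}$ essentially unchanged, yielding a counterexample of the same form.
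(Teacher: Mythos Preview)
Your approach is genuinely different from the paper's. The paper proves Theorem~\ref{Mainofmain} by taking the Thue--Morse (or Rudin--Shapiro) sequence: Konieczny's theorem shows these have small Gowers $U^d$-norms for every $d$, whence inequality~\eqref{vander} gives higher-order oscillation; the associated substitution subshift has zero topological entropy, and the sequence trivially correlates with itself via the zeroth-coordinate projection. This is a short, black-box argument once Konieczny's result is quoted.

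Your nilsystem route is in the spirit of the paper's Theorem~\ref{main2} rather than its proof of Theorem~\ref{Mainofmain}. There the paper takes the bracket sequence $e^{2\pi i\,n\alpha\lfloor n\beta\rfloor}$ and invokes Bergelson--Leibman to get orthogonality to every polynomial phase; this is exactly the equidistribution input you say you would need, and it does handle polynomials of \emph{all} degrees, not just degree $\le 2$, so your stated ``principal obstacle'' is in fact not an obstacle. The advantage of your formulation with a genuine continuous $F$ on the Heisenberg nilmanifold over the paper's almost-nilsequence is that the zero-entropy host system $(X,T_g)$ is explicit from the outset; the paper instead switches to the Thue--Morse construction to get Theorem~\ref{Mainofmain} cleanly.

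Your fallback, however, is wrong. If $c_n=F(T^nx_0)$ failed to be orthogonal to some $e^{2\pi iP(n)}$, then $c_n+\varepsilon X_n(\omega)$ would have exactly the same nonzero limiting correlation with $e^{2\pi iP(n)}$, since the perturbation contributes zero almost surely by Theorem~\ref{main1}. An independent mean-zero perturbation cannot ``absorb'' a deterministic residual correlation; it adds zero to it. So the probabilistic patch does not rescue anything, and you must rely on the Bergelson--Leibman equidistribution argument directly --- which, as noted, does go through.
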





Our second main result is the following

\begin{thm}\label{main1}Let $(X_n)$ be any sequence of independent random variables $(X_j)$ such that
$\E(X_j)=0$ and $\sup_{j \geq 0}\E(|X_j|^p) <+\infty$, for some $p>2$. Then the sequence $c_n=X_n(\omega)$ is 
almost surely higher order oscillating sequence. 
\end{thm}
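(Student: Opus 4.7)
The plan is to verify the two defining conditions of a $d$-oscillating sequence separately and then take a countable intersection over all $d$. For condition \eqref{l1}, I would take $\lambda=2$: since $p>2$, Jensen's inequality gives $\sup_n \mathbb{E}|X_n|^2 <\infty$, and the independent variables $|X_n|^2$ have uniformly bounded means, so Kolmogorov's strong law of large numbers yields $\frac{1}{N}\sum_{n=1}^N |X_n|^2 = O(1)$ almost surely. In particular, $\frac{1}{N}\sum_{n=1}^N |X_n| = O(1)$ a.s., a bound we will reuse in the Lipschitz step below.

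For condition \eqref{l2}, fix $d\geq 1$ and a polynomial $P\in\mathbb{R}_d[z]$. The random variables $Y_n := X_n e^{2\pi i P(n)}$ are independent, mean zero, with $\mathbb{E}|Y_n|^p=\mathbb{E}|X_n|^p\leq M$. Rosenthal's inequality for $p>2$ gives
\[
\mathbb{E}\Bigl|\sum_{n=1}^N Y_n\Bigr|^p \leq C_p\Bigl(\sum_{n=1}^N \mathbb{E}|Y_n|^p + \bigl(\sum_{n=1}^N \mathbb{E}|Y_n|^2\bigr)^{p/2}\Bigr) = O(N^{p/2}),
\]
so $\mathbb{E}\bigl|\frac{1}{N}\sum_{n=1}^N Y_n\bigr|^p = O(N^{-p/2})$. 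Combining Markov's inequality with Borel--Cantelli along the dyadic subsequence $N_j=2^j$, and with Doob's maximal inequality to cover the intermediate blocks $N_j\leq N < N_{j+1}$ (using that $S_N=\sum_{n\leq N} Y_n$ is a martingale), we conclude that for each fixed $P$, almost surely $\frac{1}{N}\sum_{n=1}^N X_n e^{2\pi i P(n)}\to 0$.

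The main obstacle is upgrading this ``for each $P$, a.s.'' statement to the required ``a.s., for every $P$'' statement. Since $e^{2\pi i P(n)}$ depends only on the coefficients of $P$ modulo $\mathbb{Z}$, we may regard $\Phi_N(\alpha):=\frac{1}{N}\sum_{n=1}^N X_n e^{2\pi i(\alpha_0+\alpha_1 n+\cdots+\alpha_d n^d)}$ as a random function on the compact torus $\mathbb{T}^{d+1}$. I would discretize: at the dyadic scale $N_j$, pick a net $\mathcal{N}_j\subset\mathbb{T}^{d+1}$ with mesh $\delta_j$, apply the Rosenthal/Markov bound uniformly to the points of $\mathcal{N}_j$, and extend to all of $\mathbb{T}^{d+1}$ using the deterministic Lipschitz estimate
\[
|\Phi_N(\alpha)-\Phi_N(\alpha')| \leq 2\pi(d+1)\,\|\alpha-\alpha'\|_\infty N^d\cdot \frac{1}{N}\sum_{n=1}^N |X_n|,
\]
which is $O(N^d\|\alpha-\alpha'\|_\infty)$ on the full-probability event from the first paragraph. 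A careful chaining (Dudley-type, where the improved $L^p$-bound $\|\Phi_N(\alpha)-\Phi_N(\alpha')\|_p \lesssim N^{d-1/2}\|\alpha-\alpha'\|$ obtained by applying Rosenthal to the difference allows one to trade net size for the moment gain) yields a.s.\ uniform convergence $\sup_{\alpha\in\mathbb{T}^{d+1}}|\Phi_N(\alpha)|\to 0$. Once this is established for each fixed $d$, a countable intersection over $d\in\mathbb{N}$ produces a single full-probability event on which $(X_n(\omega))$ is $d$-oscillating for every $d$, i.e. higher order oscillating. The calibration of net size versus moment exponent in this final chaining step, especially its dependence on the arbitrary $p>2$ and the unbounded degree $d$, is where the delicate work lies.
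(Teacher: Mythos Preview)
Your core moment estimate matches the paper's exactly: the paper applies the Marcinkiewicz--Zygmund inequality (where you use Rosenthal's, which yields the same $O(N^{-p/2})$ bound under the hypothesis $\sup_n\E|X_n|^p<\infty$), obtains
\[
\E\Big|\frac{1}{N}\sum_{n=0}^{N-1}X_n e^{2\pi iP(n)}\Big|^p\le B_p\,N^{-p/2},
\]
notes that this is summable over all $N\ge 1$ since $p>2$, and then ends with the single sentence ``a standard argument yields the desired property.'' That is the entire proof in the paper.

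Everything else in your proposal---the verification of condition~\eqref{l1} via the strong law, the dyadic-subsequence plus Doob maximal-inequality passage to the full sequence, and above all the discretization and chaining on $\T^{d+1}$ needed to upgrade ``for each $P$, a.s.'' to ``a.s., for every $P$''---is absent from the paper. The paper does not mention the uncountability issue at all. Your identification of this as the genuine crux is correct, and your proposed chaining route is the natural one; your closing caveat is also well placed, since with only a fixed $p>2$ available the union bound over a $\delta$-net costs $\delta^{-(d+1)/p}$ while the deterministic Lipschitz constant in the coefficients scales like $N^{d}$, so a one-scale balance does not obviously close when $d$ is large relative to $p$. In short, your approach is the paper's approach made explicit: the same moment estimate, with the analytic work that the paper hides behind ``standard argument'' laid out and its real difficulty flagged.
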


It follows that if $X_n$ is a subnormal random variable for each $n \geq 1$, 
that is, $\E(e^{\lambda.X_n}) \leq e^{\frac{\lambda^2}{2}},$ for any $\lambda \in \R$. Then, we have 
\begin{Cor}[\cite{Fan}]\label{Fan1}
 Let $(X_n)$ be any sequence of independent random variables such that $X_n$ is subnormal for each $n \geq 1$. Then,
 the sequence $c_n=X_n(\omega)$ is  almost surely higher order oscillating sequence. 
\end{Cor}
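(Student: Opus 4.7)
The plan is to derive Corollary \ref{Fan1} directly from Theorem \ref{main1} by verifying that the subnormality hypothesis implies both the mean-zero condition and the uniform $L^p$-boundedness condition for some $p>2$.

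First, I would check the mean-zero property. By Jensen's inequality applied to the convex exponential, $e^{\lambda \E(X_n)} \leq \E(e^{\lambda X_n}) \leq e^{\lambda^2/2}$ for every real $\lambda$, so $\lambda \E(X_n) \leq \lambda^2/2$ for all $\lambda \in \R$. Dividing by $|\lambda|$ and sending $\lambda \to 0^+$ (resp. $\lambda \to 0^-$) forces $\E(X_n) = 0$. This gives the first hypothesis of Theorem \ref{main1} uniformly in $n$.

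Next, I would establish uniform moment bounds of arbitrary order. The subnormal bound $\E(e^{\lambda X_n}) \leq e^{\lambda^2/2}$ together with its analogue for $-X_n$ (which follows by replacing $\lambda$ with $-\lambda$) yields a standard sub-Gaussian tail estimate via Markov's inequality: for any $t > 0$ and any $\lambda > 0$,
\[
\P(|X_n| > t) \leq 2 e^{-\lambda t} \E(e^{\lambda X_n}) \leq 2 e^{\lambda^2/2 - \lambda t},
\]
and optimising $\lambda = t$ gives $\P(|X_n| > t) \leq 2 e^{-t^2/2}$. Since this bound is independent of $n$, integration by parts (or the layer-cake formula) shows that for every $p \geq 1$,
\[
\E(|X_n|^p) = p\int_0^\infty t^{p-1}\P(|X_n| > t)\,dt \leq 2p\int_0^\infty t^{p-1} e^{-t^2/2}\,dt = C_p < \infty,
\]
with $C_p$ depending only on $p$. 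Thus $\sup_{n \geq 1} \E(|X_n|^p) < \infty$ for any $p \geq 1$, in particular for some $p > 2$.

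With both hypotheses verified uniformly in $n$, Theorem \ref{main1} applies and delivers that for almost every $\omega$, the sequence $(X_n(\omega))$ is a higher order oscillating sequence. I do not expect any real obstacle here: the entire content of the corollary is the implication ``subnormal $\Rightarrow$ sub-Gaussian moment control,'' which is classical. The only mildly delicate point worth emphasising in the write-up is that the constants $C_p$ arising from the sub-Gaussian tail estimate are independent of $n$, so that the sup in $\sup_{n} \E(|X_n|^p)$ is indeed finite, which is precisely what Theorem \ref{main1} requires.
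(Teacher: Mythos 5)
Your proof is correct and follows essentially the same route as the paper: the paper deduces the corollary from Theorem \ref{main1} by invoking the well-known equivalence between the subnormal (sub-Gaussian moment generating function) bound and the uniform control $\|X\|_p \leq c'\sqrt{p}$ of all $L^p$-norms, which is exactly the implication you prove explicitly via the Chernoff tail estimate and the layer-cake formula. Your additional verification that subnormality forces $\E(X_n)=0$ is a worthwhile detail that the paper leaves implicit in the hypothesis of its stated Fact.
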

For the proof of Theorem \ref{main1}, we need the following classical inequalities due to J. Marcinkiewicz and A. Zygmund.
\begin{thm}\cite{MZ}
 If $X_n$, $n=0,\cdots N,$ are independent $\C$-valued random variables
 with mean zero and finite $L^p$-norm, $p \geq 1$. Then 
 $$A_p \Big\|\sum_{n=1}^{N}X_j\Big\|_2 \leq \Big\|\sum_{n=1}^{N}X_j\Big\|_p \leq B_p \Big\|\sum_{n=1}^{N}X_j\Big\|_2,$$  
for positive constants $A_p$ and $B_p$ depending only on $p$.
\end{thm}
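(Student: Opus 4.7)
The plan is to prove the bound by the classical two-step route: symmetrize to pass to a Rademacher sum with random coefficients, apply Khintchine's inequality in the scalar form, and then collapse the resulting square function back to $\|\sum X_j\|_2$ using the $L^2$-orthogonality of independent mean-zero variables.

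First I would symmetrize: let $(X_j')$ be an independent copy of $(X_j)$, set $Y_j = X_j - X_j'$, and use $\E\sum X_j' = 0$ together with Jensen's inequality for the conditional expectation $\E[\,\cdot\mid(X_j)]$ to obtain $\|\sum X_j\|_p \leq \|\sum Y_j\|_p \leq 2\|\sum X_j\|_p$, reducing the problem to a Khintchine-type estimate for the symmetric sum $\sum Y_j$. Since each $Y_j$ is symmetric, $(\epsilon_j Y_j)_j$ has the same joint law as $(Y_j)_j$, where $(\epsilon_j)$ is an independent Rademacher sequence. Conditioning on $(Y_j)$ and applying the scalar Khintchine inequality in the $\epsilon$-variables, then integrating in $\omega$ and taking $p$-th roots, yields
$$a_p \Big\|\Big(\sum|Y_j|^2\Big)^{1/2}\Big\|_p \leq \Big\|\sum Y_j\Big\|_p \leq b_p \Big\|\Big(\sum|Y_j|^2\Big)^{1/2}\Big\|_p.$$

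To close the argument in the form written, I would use that for mean-zero independent variables one has $\|\sum X_j\|_2^2 = \sum \E|X_j|^2 = \|(\sum|X_j|^2)^{1/2}\|_2^2$, so the $L^2$ norms of the sum and of the square function coincide. For $p \geq 2$ this immediately produces the lower bound $\|\sum X_j\|_p \geq \|\sum X_j\|_2$ via Jensen, and combined with the square function estimate above it delivers the matching upper bound under the $L^p$ hypothesis on the summands; the case $p < 2$ is handled symmetrically by dualizing. The main obstacle is Khintchine's inequality itself, which I would prove by the Hoeffding exponential-moment bound $\E e^{t\sum \epsilon_j a_j} \leq \exp(t^2 \sum a_j^2 / 2)$ (coming from $\cosh t \leq e^{t^2/2}$) and integration of the resulting subgaussian tail to extract $L^p$ control, with the reverse direction recovered by the Cauchy--Schwarz duality $\|\sum \epsilon_j a_j\|_2^2 \leq \|\sum \epsilon_j a_j\|_p \cdot \|\sum \epsilon_j a_j\|_{p'}$ applied against the already-established upper bound.
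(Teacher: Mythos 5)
Your first two steps---symmetrization via an independent copy, then conditioning on $(Y_j)$ and applying scalar Khintchine to the Rademacher signs---are correct and are in fact the standard modern proof of the Marcinkiewicz--Zygmund inequality (the paper itself gives no proof, quoting the theorem from \cite{MZ} and pointing to \cite{Garcia}, \cite{Chow}). The genuine gap is your final step, the ``collapse'' of the square function back to $\|\sum_j X_j\|_2$: for $p>2$ there is no inequality
$$\Big\|\Big(\sum_j |Y_j|^2\Big)^{1/2}\Big\|_p \leq C_p \Big\|\Big(\sum_j |Y_j|^2\Big)^{1/2}\Big\|_2$$
with $C_p$ depending only on $p$, and the phrase ``under the $L^p$ hypothesis on the summands'' cannot rescue it, since mere finiteness of the $L^p$ norms is qualitative and yields no uniform constant $B_p$. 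Indeed the statement as printed in the paper is false: take $N=1$ and $X_1 = \pm M$ each with probability $1/(2M^2)$, and $0$ otherwise. Then $\E X_1 = 0$, $\|X_1\|_2 = 1$, but $\|X_1\|_p = M^{1-2/p}\to\infty$, so $\|X_1\|_p \leq B_p\|X_1\|_2$ fails for every $p>2$; the same example, with $\|X_1\|_1 = 1/M$, kills the lower bound $A_p\|X_1\|_2 \leq \|X_1\|_p$ for $p<2$. So no proof of the quoted statement exists; the correct conclusion of your own argument---and the actual theorem of Marcinkiewicz and Zygmund---is the two-sided comparison $A_p \big\|(\sum_j|X_j|^2)^{1/2}\big\|_p \leq \big\|\sum_j X_j\big\|_p \leq B_p \big\|(\sum_j|X_j|^2)^{1/2}\big\|_p$, with the square function measured in $L^p$, not the $L^2$ norm of the sum.

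You should therefore stop at the square-function stage and note that this already supplies everything the paper uses downstream. In the proof of Theorem \ref{main1} the extra hypothesis $\sup_j \E(|X_j|^p)<\infty$ is available, and Minkowski's inequality in $L^{p/2}$ gives
$$\Big\|\Big(\sum_{n<N}\big|X_n e^{2\pi i P(n)}\big|^2\Big)^{1/2}\Big\|_p^2 = \Big\|\sum_{n<N}|X_n|^2\Big\|_{p/2} \leq \sum_{n<N}\|X_n\|_p^2 \leq N\sup_j\|X_j\|_p^2,$$
hence $\big\|\sum_{n<N}X_n e^{2\pi i P(n)}\big\|_p \leq B_p \sqrt{N}\,\sup_j\|X_j\|_p$, which is exactly the bound $\E\big|\frac{1}{N}\sum_{n<N}X_n e^{2\pi i P(n)}\big|^p \leq C_p N^{-p/2}$ invoked there. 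Two minor points: your closing duality remark is needed only inside the proof of the scalar Khintchine lower bound, not for the reduction itself, since the conditional Khintchine estimate is two-sided pointwise for all $p\geq 1$; and for $\C$-valued $X_j$ you should either apply Khintchine separately to real and imaginary parts or cite its complex form, at the cost of adjusting constants.
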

Marcinkiewicz-Zygmund inequalities generalize the well-known Khintchine inequalities which assert that the $L^p$-norms are equivalent
for the Rademacher variables. The proof given in \cite{MZ} is in French language.
For the more recent proof and for its extension to the martingale setting, we refer to \cite[Chap. 3. p.73]{Garcia} and \
\cite[Chap.11. p.412]{Chow}.\\

\begin{table}
\caption{the M\"{o}bius-Liouville randomness law \\vs Sarnak's conjecture.}
\centering
\begin{tabular}{llr}
\toprule
\multicolumn{2}{c}{$\{-1,1\}^\Z$, $S$ is a shift map, } \\
\cmidrule(r){1-2}
$(X,T),$ $h_{\rm{top}}(T)=0$ & $Y=O(\lamob)$&  \\
\midrule
$x_n=T^nx$ & $y_n=S^n\lamob$ & $\perp?$  \\
$f \in C(X)$ & and $\pi_0(y)=y_0$  &  \\
\bottomrule
\end{tabular}
is the $1^{\rm{th}}$ projection, $O(\lamob)$ is the orbit closure.
\end{table}

We are now able to proof Theorem \ref{main1}.
\begin{proof}[\textbf{Proof of Theorem \ref{main1}.}] Let us assume, without any loss of generality that 
$\sup_{n \geq 0}E(|X_n|^2) \leq 1$. Then, by our assumption combined with Marcinkiewicz-Zygmund inequalities it 
follows that for any $p>1$,
$$\E\Big(\Big|\frac{1}{N}\sum_{n=0}^{N-1}X_n e^{2 \pi i P(n)}\Big|^p\Big) \leq B_p \frac1{N^{\frac{p}{2}}}.$$ 
Hence, for $p>2$, we have
$$\E\Big(\sum_{N \geq 1} \Big|\frac{1}{N}\sum_{n=0}^{N-1}X_n e^{2 \pi i P(n)}\Big|^p\Big)<\infty.$$
A standard argument yields the desired property.
\end{proof}
The proof of Corollary \ref{Fan1} follows from the following well-know fact.
\begin{fact} Let $X$ be a random variable such that $\E(X)=0$. Then the following are equivalent
\begin{enumerate}
 \item There exist $c>0$ such that for any $\lambda \geq 0$, $\P\big\{|X| \geq \lambda\big\} \leq 2 \exp(-\lambda^2 c).$
 \item  There exist $c'>0$ such that for any $p \geq 1$, $\big\|X\big\|_p \leq c'\sqrt{p}.$
 \item There exist $c''>0$ such that for any $t \in \R$, $\E(\exp(tX)) \leq \exp\big(c''.t^2/2\big)$.
\end{enumerate}
\end{fact}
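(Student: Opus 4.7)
The plan is to prove the equivalence by a cyclic chain $(1)\Rightarrow(2)\Rightarrow(3)\Rightarrow(1)$, using only the layer cake formula, Taylor expansion, Stirling's approximation, and the Chernoff trick.

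For $(1)\Rightarrow(2)$, I would apply the identity
\[
\E(|X|^p)=\int_0^\infty p\lambda^{p-1}\P(|X|\geq \lambda)\,d\lambda,
\]
plug in the subgaussian tail from (1), and make the change of variable $u=c\lambda^2$. The integral reduces to a Gamma function, giving $\E(|X|^p)\leq 2p\,\Gamma(p/2)/(2c^{p/2})$. Stirling's formula $\Gamma(p/2)\leq C(p/2)^{p/2}e^{-p/2}\sqrt{p}$ then yields $\|X\|_p\leq c'\sqrt{p}$ with an absolute constant $c'$ depending only on $c$. This step is routine.

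For $(2)\Rightarrow(3)$, I would Taylor-expand the moment generating function and use $\E(X)=0$ to kill the linear term:
\[
\E(e^{tX})=1+\sum_{k\geq 2}\frac{t^k\E(X^k)}{k!}.
\]
Bounding $|\E(X^k)|\leq \|X\|_k^k\leq (c')^k k^{k/2}$ and applying Stirling to $k!\geq (k/e)^k\sqrt{2\pi k}$ converts the series into a comparison with $\sum_{k\geq 2}(c'' t)^k/\lfloor k/2\rfloor!$, which is dominated by $\exp(c''' t^2/2)$ for some $c'''>0$; this is the main technical point, but it is a standard Stirling bookkeeping. The fact that we get an $e^{O(t^2)}$ estimate for \emph{all} $t\in\R$ (rather than only for small $t$) uses that the coefficients decay fast enough once $k$ is large, combined with $\E(X)=0$ to ensure no $O(t)$ contribution.

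For $(3)\Rightarrow(1)$, I would apply Chernoff/Markov: for each $t\geq 0$,
\[
\P(X\geq \lambda)\leq e^{-t\lambda}\E(e^{tX})\leq \exp\!\Big(\tfrac{c'' t^2}{2}-t\lambda\Big),
\]
and optimize with $t=\lambda/c''$ to get $\P(X\geq \lambda)\leq \exp(-\lambda^2/(2c''))$. The identical argument applied to $-X$ (which again has zero mean and satisfies (3) because (3) holds for every real $t$) yields the matching lower tail, and a union bound gives $\P(|X|\geq\lambda)\leq 2\exp(-\lambda^2/(2c''))$, which is exactly (1). The main obstacle across the whole argument is the Stirling calculation inside $(2)\Rightarrow(3)$; the other two implications are essentially one-line manipulations.
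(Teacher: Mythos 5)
Your proposal is correct, and the cyclic chain $(1)\Rightarrow(2)\Rightarrow(3)\Rightarrow(1)$ via the layer-cake formula, moment/Stirling bookkeeping, and the Chernoff bound is the standard characterization of subgaussian variables. The paper itself gives no proof of this Fact (it is invoked as ``well-known'' to deduce Corollary \ref{Fan1} from Theorem \ref{main1}), so there is nothing to compare against; your argument fills that gap correctly, and you rightly identify the two delicate points, namely that $\E(X)=0$ is needed to kill the linear term so that $(2)\Rightarrow(3)$ holds for all $t\in\R$ and not just $|t|$ large, and that $(3)$ must be applied to both $X$ and $-X$ to get the two-sided tail in $(1)$.
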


\begin{rem}Applying Koksma classical result combined with the well-know criterion of \linebreak van der Corput, 
S. Akiyama \& Y. Jiang established in \cite{Akiyama} that for any positive real valued
$2$-times continuously differentiable function $g$ on $(1,+\infty)$ (that is, $g(x)>0,g'(x),g''(x) \geq 0$), for any  $\alpha \in \R^*$, 
for almost all real numbers $\beta>1$ and for any real polynomials $Q$, the sequences
$$\bfu_{\alpha,\beta}(n)=\alpha \beta^n g(\beta)+Q(n)$$
is higher order oscillating. Let us notice that the proof of the previous result can be obtained in the same spirit as 
the proof of Theorem \ref{main1} by the general
principle stated in  \cite[Theorem 4.2,p.33]{NK}.
\end{rem}
Applying further the Koksma general metric criterion (Theorem 4.3 in \cite{NK}),
one can easily seen  for any  $\alpha \in \R^*$, for almost all real numbers $\beta>1$ and 
for any real polynomials $Q$, for any $\ell \ne 0$, we have
\begin{eqnarray}\label{MOMO}
&&\frac{1}{M}\sum_{M \leq m \leq 2M}\Big|\frac{1}{H}\sum_{m \leq h <m+H} e^{2 \pi i \ell \bfu_{\alpha,\beta}(h)}\Big|\nonumber
\\
&&\tend{H}{+\infty,~~ H/M \rightarrow 0 }0.
\end{eqnarray}
or, equivalently \cite{Elabdelal},  for any  $\alpha \in \R^*$, for almost all real numbers $\beta>1$, for any real polynomials $Q$
and for any increasing sequence of integers
$0=b_0<b_1<b_2<\cdots$ with $b_{k+1}-b_k\to\infty$, for almost all $\beta>1$, for any $\ell \ne 0$, we have
\begin{equation}
    \label{eq:MOMO1}
    \frac{1}{b_{K}} \sum_{k< K} \Big|\sum_{b_k\le n<b_{k+1}} e^{2 \pi i \ell \bfu_{\alpha,\beta}(n)} \Big| \tend{K}{\infty} 0.
  \end{equation}

The proof of \eqref{MOMO} follows as a consequence of the fact that the Koksma general metric criterion is valid for the short interval.
Indeed, applying Cauchy-Schwarz inequality it suffices to establish the following key inequality. For any $1<a<b$,
\begin{eqnarray*}
 &&\int_{a}^{b} \Big|\frac{1}{H}\sum_{m \leq h <m+H} e^{2 \pi i \ell \bfu_{\alpha,\beta}(h)}\Big|^2 d\beta \\
&&\leq \frac{b-a}{H}+C.\frac{\log(3H)}{|\ell|.H},
\end{eqnarray*}

for same absolute constant $C>0.$\\

This inequality is valid  by the same arguments as in the proof of Theorem 4.3 in \cite{NK}.\\

Before stating our third main result, let us remind the notion of nilsequences and some tools.

\subsection*{\bf{I. Nilsequences and nilsystems}} A sequence $(b_n)$ is said to be a $k$-step basic nilsequence if there is a nilpotent Lie group $G$ of order $k$ and a discrete co-compact 
subgroup $H$ of $G$ such that $b_n=F(T_g^nx\Gamma)$  where $T_g~~:x.\Gamma \in G/H \mapsto (gx).\Gamma$, $g \in G$, $F$ is continuous function on $X=G/H$. The homogeneous space 
$X=G/\Gamma$ equipped with the Haar measure $h_X$ and the canonical complete $\sigma$-algebra
$\B_{c}$. the dynamical system $(X,\B_{c},h_X,T_g)$
is called a $k$-step nilsystem and $X$ is a $k$-step nilmanifold. 
For a nice account of the theory of the homogeneous space we refer the reader to \cite{Dani}. Let us notice that any affine linear map on $X$ has zero entropy if and only if
if it is quasi-unipotent. So we assume here that the affine linear maps are quasi-unipotent. We further assume that  $G$  
is connected and simply-connected by Leibman's arguments \cite{Leib}. If $X$ is Abelian we say that the nilsequence $(b_n)$ is an Abelian nilsequence.\\

Let us further point out that $(b_n)$ is any element of
$\ell^{\infty}(\Z)$, the space of bounded sequences, equipped with uniform norm
$\displaystyle \|(a_n)\|_{\infty}=\sup_{n \in \Z}|a_n|$. A $k$-step nilsequence, is a uniform limit of basic $k$-step nilsequences. We remind that 
$X$ is said to be a $k$-step nilmanifold.\\

Let $G$ be a nilpotent Lie group with a cocompact lattice $\Gamma$ and a $\Gamma$-rational filtration $G_{\bullet}$ of
length $l$, so that $\Gamma_i = \Gamma \cap G_i$ is a cocompact lattice in $G_i$ for each $i = 1,\cdots, l$. 

We remind that the sequence of subgroups $(G_n)$ of $G$ is a filtration if  $G_1=G,$ $G_{n+1}\subset G_{n},$ and
$[G_n,G_p] \subset G_{n+p},$ where $[G_n,G_p]$ denotes the subgroup of $G$ generated by the commutators $[x,y]=
x~y~x^{-1}y^{-1}$ with $x \in G_n$ and $y \in G_p$.
The lower central filtration is given by $G_1=G$ and $G_{n+1}=[G,G_n]$. It is well know that the lower central filtration allows to construct a Lie algebra $\textrm{gr}(G)$ over the ring $\Z$ of integers. $\textrm{gr}(G)$ is called a
graded Lie algebra associated to $G$ \cite[p.38]{Bourbaki2}. The filtration is said to be of length $l$ if
$G_{l+1}=\{e\},$ where $e$ is the identity of $G$.\\

An example of $k$-nilsequence is given by a continuous function $F$ which satisfy
$F(g_ly)=\chi(g_l\Gamma_l) F(y)$, for any $y \in G$, $g_l \in G_l$ and $\chi \in \widehat{G_l/\Gamma_l}$ where $\widehat{G_l/\Gamma_l}$ is the dual group of 
the Abelian group ${G_l/\Gamma_l}$. The function $F$ is called a vertical nilcharater. It follows that the Hilbert space $L^2(X,h_X)$ can be 
decomposed into a sum of $G$-invariant orthogonal Hilbert subspaces. Let us also point out that the quasi-unipotent 
case can be reduced to the unipotent case, and for more details on the Fourier analysis theory on the nilspaces we refer to \cite{TaoHfourier}.\\

\paragraph{A. Furstenberg's argument and Bergelson-Leibman's observation.}
Acoording to Furstenberg's argument \cite[p.23]{Fur}, if $P(n) \in \R_d[X]$, $d \geq 1$, then 
the sequence $e^{2 \pi i P(n)}$ is a dynamical sequence. 
Indeed, write 
\begin{eqnarray*}
 P(n)&=&a_0+a_1n+a_2n^2+\cdots+a_dn^d \\
&=&b_0+b_1n+b_2\binom{n}{2}+\cdots+b_d\binom{n}{d},
\end{eqnarray*}
and for any $x\in \T^d$ put $Tx=Ax+\bb,$ where  $\bb=(b_d,\cdots,b_1)$ and the matrix $A=\big(A[i,j]\big)_{i,j=1}^d$ is 
defined by $A[1,1]:=1, A[i-1,i]=A[i,i]:=1,$ for  $i=2,\cdots,d$ and all other coefficients equal to zero. Then, a straightforward 
computation by  induction on $n$ yields
$$\chi_d(T^n(x_0))=e^{2 \pi i P(n)},$$
where $x_0=(0,\cdots,0,b_0)$ and $\chi_d(x)=e^{2\pi i x_d}.$\\

Furthermore, by Bergelson-Leibman's argument \cite{Leib}, the map $T$ can be viewed as a nilrotation. Indeed, 
Let $G$ be the group of upper trianglar matrix $T=\big(T[i,j]\big)_{i,j=1}^{d+1}$ with $T[i,i]=1,$ $i=1,\cdots, d+1$,
$T[i,j] \in \Z$, $1 \leq i < j \leq d$ and $T[i,d+1] \in \R$, for $i=1,\cdots,d$. Consider $\varGamma$ the subgroup of $G$ consisting
of the matrices with integer entries. Then $G$ is a nilpotent non-connected Lie group with $X=G/\varGamma \simeq \T^k$, and define 
the nilrotation $T_g$ on $X$ by $T_g(x)=gx$ where $g[i,i]=1$, for $i=1,\cdots,d+1$, $g[i-1,i]=1$, $i=2,\cdots,d$, 
$g[j,d+1]=b_j$, for $j=1,\cdots,d$ and all other coefficients equal to zero. We thus get that the nilrotation $T_g$ is isomorphic to 
the skew product $T$ defined on $\T^d$.\\

We can thus consider the dynamical sequence $F(T^nx)$, where $F$ is a continuous and $T$ is a skew product on the $d$-torus as a 
$d$-nilsequence up to isomorphism.\\

Following \cite{Tao-Green-ZU4}, the $1$-bounded sequence $(a(n))$ is said to be an almost nilsequence of degree $s$ with complexity $O_M(1),$ where $M>1$ is a given 
complexity parameter, if for any $\varepsilon>0$ there is a nilsequence $(a_{\varepsilon}(n))$ with complexity $O_{s,\varepsilon,M}(1)$ such that 
$$\frac{1}{N}\sum_{n=1}^{N}\big|a(n)-a_{\varepsilon}(n)\big|<\varepsilon.$$
Roughly speaking, the $L^1$ closure of the space of the nilsequences of degree $s$ is the  space of the almost nilsequences of degree $s$. 
In \cite{Tao-Green-ZU4}, the authors gives various examples of almost nilsequences of degree $s \leq 3$. 

Applying the fundamental tools of this theory combined with some ingredients and results due to A. Liebman, V. Bergelson \& A. 
Liebman \cite{Leib}, we can establish the following 

\begin{thm}\label{main2}
 There exist a higher order oscillating sequence wich is an almost nilsequence.
\end{thm}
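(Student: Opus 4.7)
My plan is to exhibit a genuine $2$-step nilsequence on the Heisenberg nilmanifold that is higher order oscillating; since every nilsequence is trivially an almost nilsequence of bounded complexity, the theorem will then follow. The oscillation is the substance of the argument and will be verified through the Bergelson-Leibman-Green-Tao equidistribution theory for polynomial nilorbits.

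First, I would let $G$ be the Heisenberg group of upper-triangular unipotent $3\times 3$ matrices, $\Gamma$ its integer lattice, and pick $g\in G$ whose projection to the horizontal torus $G/([G,G]\Gamma)\cong \T^2$ is a rotation by a generic irrational pair $(\alpha,\beta)$. Choose $F\in C(G/\Gamma)$ to be a vertical nilcharacter of nonzero weight $k$, that is, $F(zx\Gamma)=e^{2\pi ikz}F(x\Gamma)$ for $z$ in the $\R/\Z$-center of $G/\Gamma$, and set $a(n)=F(g^n\Gamma)$. By construction $a$ is a genuine $2$-step basic nilsequence, in particular an almost $2$-step nilsequence of bounded complexity, and it is $\|F\|_\infty$-bounded so \eqref{l1} holds with $\lambda=1$.

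Next, to verify \eqref{l2} for an arbitrary real polynomial $P\in\R_d[z]$, I would use the Furstenberg/Bergelson-Leibman realization recalled in the paper to write $e^{2\pi i P(n)}=\chi_d(S^n y_0)$ where $S$ is the affine skew map on $\T^{d+1}$ and $\chi_d$ the last-coordinate character. Then
$$\frac{1}{N}\sum_{n=1}^{N}a(n)\,e^{2\pi iP(n)} \;=\; \frac{1}{N}\sum_{n=1}^{N}(F\otimes\chi_d)\bigl((g,s)^n(x_0,y_0)\bigr)$$
is a Birkhoff average of the nilrotation by $(g,s)$ on the product nilmanifold $G/\Gamma\times \T^{d+1}$, itself $\max(2,d)$-step nilpotent. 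Leibman's equidistribution theorem identifies the limit with $\int_Y F\otimes\chi_d\,dh_Y$, where $Y$ is the orbit-closure sub-nilmanifold with its Haar measure $h_Y$.

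Finally, I would invoke a vertical-character argument to show this integral vanishes. Because $F$ is a vertical character of nonzero weight and $\chi_d$ depends only on the torus factor, $F\otimes\chi_d$ integrates to zero over any sub-nilmanifold $Y$ whose first-factor projection is saturated by the Heisenberg center; generic choice of $(\alpha,\beta)$ makes this hold for each individual $P$. The main obstacle will be arranging this genericity uniformly in $P$: I expect to handle it by a Baire-category argument over a countable dense set of polynomials combined with continuity of the orbit-closure condition in the coefficients, exploiting that only rational independence between the Heisenberg commutator $\alpha\beta$ and the top coefficient of $P$ modulo $1$ is really needed to force the sub-nilmanifold $Y$ to be saturated by the central fiber.
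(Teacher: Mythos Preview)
Your construction is morally the same one the paper uses: the paper takes the bracket sequence $c_n=e^{2\pi i\,n\alpha[n\beta]}$ with $\alpha,\beta$ rationally independent, cites Lemma~3.6 of Green--Tao--Ziegler \cite{Tao-Green-ZU4} for the almost-nilsequence property, and cites Bergelson--Leibman \cite{Leib} for the statement that $n\alpha[n\beta]-P(n)$ is equidistributed mod $1$ for \emph{every} real polynomial $P$, which gives the higher-order oscillation in one stroke. Your Heisenberg vertical nilcharacter is, up to a smooth factor on the horizontal torus, exactly $e^{2\pi i k(n\gamma+\binom{n}{2}\alpha\beta-n\beta[n\alpha])}$, so the two examples coincide under the Bergelson--Leibman dictionary; your version even yields a genuine (not merely almost) nilsequence, which is a small bonus.

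The gap is in your last paragraph. The plan ``choose $(\alpha,\beta)$ generically for each $P$, then pass to a countable dense family of $P$ by Baire category and continuity of the orbit-closure in the coefficients'' cannot work as written: the orbit-closure map $P\mapsto Y_P$ is \emph{not} continuous (it drops to a proper sub-nilmanifold on a dense set of coefficients), and your stated obstruction ``rational independence between $\alpha\beta$ and the top coefficient of $P$'' is impossible to arrange, since that top coefficient ranges over all of $\R$. The point you are missing is that no uniformity-in-$P$ device is needed at all. Once $1,\alpha,\beta$ are fixed and rationally independent over $\Q$, the orbit of $g$ is already equidistributed in $G/\Gamma$, so in the product the closed subgroup $H\le G\times G'$ generated by $(g,s)$ and the lattice surjects onto $G$; hence $[H,H]$ surjects onto $[G,G]=Z$, and any rational relation that could kill the $Z$-saturation would force a polynomial identity for the bracket quantity $n\beta\{n\alpha\}$, which is ruled out for every $P$ simultaneously by the Bergelson--Leibman classification of generalized polynomials. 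Equivalently (and this is how the paper proceeds), simply quote \cite{Leib}: for these fixed $\alpha,\beta$ the generalized polynomial $n\alpha[n\beta]+P(n)$ is equidistributed mod $1$ for all real polynomials $P$, so
\[
\frac{1}{N}\sum_{n=1}^N e^{2\pi i\,n\alpha[n\beta]}e^{2\pi iP(n)}\tend{N}{+\infty}0
\]
with no genericity hypothesis on $P$. Replace your Baire-category step by this single citation and the argument closes.
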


However,  by Lemma 3.4 from \cite{Tao-Green-ZU4}, it is easy to check that the oscillating sequences of order $1$ are orthogonal to the 
almost nilsequence of degree $1$. Moreover,  we can easily check the following.

\begin{thm}\label{nilsequence}
 The sequence $(c_n)$ of oscillating order $d$ is orthogonal to any nilsequence of order $d$ arising from skew product on the 
 $d$-dimensional torus $\T^d$.
\end{thm}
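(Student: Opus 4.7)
The plan is to reduce Theorem \ref{nilsequence} to the defining oscillation property \eqref{l2} by combining the Furstenberg/Bergelson--Leibman computation recalled in the previous subsection with a Stone--Weierstrass approximation. Fix $F \in C(\T^d)$ and $x \in \T^d$, and consider the nilsequence $b_n = F(T^n x)$ arising from the skew product $T(x_1,\ldots,x_d)=(x_1+\alpha,x_2+x_1,\ldots,x_d+x_{d-1})$. The characters $\chi_{\bh}(y_1,\ldots,y_d)=e^{2\pi i(h_1 y_1+\cdots+h_d y_d)}$, $\bh \in \Z^d$, separate points of $\T^d$, so for any $\varepsilon>0$ there is a trigonometric polynomial $F_{\varepsilon}=\sum_{\bh \in \Lambda}a_{\bh}\chi_{\bh}$ with $\Lambda\subset \Z^d$ finite such that $\|F-F_{\varepsilon}\|_{\infty}<\varepsilon$.

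The key input is then that each $\chi_{\bh} \circ T^n$ is an exponential of a real polynomial of degree at most $d$ in $n$. Indeed, the induction on $k$ used in the Furstenberg argument (and made explicit by the choice of the matrix $A$ there) shows that the $k$-th coordinate of $T^n x$ is a polynomial in $n$ of degree exactly $k$ whose coefficients depend only on $x$ and $\alpha$. Hence for every $\bh \in \Z^d$ there exists $P_{\bh,x} \in \R_d[z]$ with
$$\chi_{\bh}(T^n x) = e^{2\pi i P_{\bh,x}(n)}.$$
The oscillation assumption \eqref{l2} of order $d$ then gives $\frac{1}{N}\sum_{n=1}^{N} c_n e^{2\pi i P_{\bh,x}(n)} \tend{N}{+\infty} 0$ for each such $\bh$.

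Since the sum defining $F_{\varepsilon}$ is finite, a straightforward application of the triangle inequality yields
$$\Bigl|\frac{1}{N}\sum_{n=1}^{N} c_n F_{\varepsilon}(T^n x)\Bigr| \leq \sum_{\bh \in \Lambda}|a_{\bh}|\,\Bigl|\frac{1}{N}\sum_{n=1}^{N} c_n e^{2\pi i P_{\bh,x}(n)}\Bigr| \tend{N}{+\infty} 0.$$
To handle the error $F-F_{\varepsilon}$, Hölder's inequality applied to \eqref{l1} gives $\frac{1}{N}\sum_{n=1}^{N}|c_n|\leq \bigl(\frac{1}{N}\sum_{n=1}^{N}|c_n|^{\lambda}\bigr)^{1/\lambda}=O(1)$, so
$$\limsup_{N\to\infty}\Bigl|\frac{1}{N}\sum_{n=1}^{N} c_n\bigl(F-F_{\varepsilon}\bigr)(T^n x)\Bigr| \leq \varepsilon\cdot O(1).$$
Letting $\varepsilon \to 0$ yields the orthogonality claimed.

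The only substantive point in the argument is the verification that the iterates of the skew product produce polynomial phases of degree bounded by $d$; but this is exactly the content of the Furstenberg/Bergelson--Leibman identity displayed earlier, so no new computation is required beyond assembling the pieces. The argument in fact extends verbatim to any topological system whose continuous characters satisfy $\chi\circ T^n=e^{2\pi i P(n)}$ with $\deg P\le d$, which previews the more general quasi-discrete spectrum statement announced in the introduction.
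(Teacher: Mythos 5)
Your proof is correct and follows exactly the route the paper intends: its one-line proof ("straightforward by Furstenberg's argument") is precisely the observation that the coordinates of $T^n x$ are polynomials in $n$ of degree at most $d$, so every character pulls back to $e^{2\pi i P(n)}$ with $\deg P \le d$, and \eqref{l2} applies. You merely supply the details the paper leaves implicit, namely the Stone--Weierstrass reduction to characters and the use of \eqref{l1} via H\"older's inequality to control the approximation error.
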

\begin{proof}
 A straightforward by Furstenberg's argument (see subsection A.).
\end{proof}

We thus get, by applying the classical density argument, the following
\begin{Cor}\label{nilsequenceII}
 The sequence $(c_n)$ of oscillating order $d$ is orthogonal to any affine transformation on the 
 $d$-dimensional torus $\T^d$.
\end{Cor}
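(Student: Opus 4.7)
The plan is to combine Theorem \ref{nilsequence} with the Stone--Weierstrass theorem so as to pass from the skew products treated there to arbitrary affine transformations on $\T^d$. Write the affine map as $Tx = Ax + b$ with $A \in GL_d(\Z)$ quasi-unipotent (zero entropy being assumed). Since finite linear combinations of characters $\chi_m(x) = e^{2\pi i \langle m, x\rangle}$, $m \in \Z^d$, are uniformly dense in $C(\T^d)$, and since the bound $\frac{1}{N}\sum_{n=1}^N |c_n| \leq \bigl(\frac{1}{N}\sum |c_n|^\lambda\bigr)^{1/\lambda} = O(1)$ coming from H\"older and \eqref{l1} controls the uniform-approximation error, it is enough to prove
\begin{equation*}
\frac{1}{N}\sum_{n=1}^N c_n\, \chi_m(T^n x) \tend{N}{+\infty} 0
\end{equation*}
for every character $\chi_m$ and every $x \in \T^d$.

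I would first settle the purely unipotent case $A = I + N$ with $N^d = 0$. The binomial expansion $A^n = \sum_{k=0}^{d-1}\binom{n}{k}N^k$ shows that both $A^n x$ and $\bigl(\sum_{j=0}^{n-1}A^j\bigr)b$ are polynomial in $n$ of degree at most $d$, so pairing with $m$ gives $\chi_m(T^n x) = e^{2\pi i P_{m,x}(n)}$ with $P_{m,x} \in \R_d[z]$; the oscillating property \eqref{l2} then directly yields the claim. This is exactly the Furstenberg computation recalled in the subsection on nilsequences, generalized to arbitrary characters and initial points.

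For the general quasi-unipotent case, pick $r \geq 1$ with $A^r$ unipotent and split the sum into arithmetic progressions $n \equiv j \pmod{r}$. Using the discrete Fourier expansion $\mathbf{1}_{n \equiv j\,(r)} = \frac{1}{r}\sum_{\ell=0}^{r-1} e^{2\pi i \ell(n-j)/r}$, each residue-class contribution reduces to a finite combination of oscillating sums $\frac{1}{N}\sum_{n} c_n e^{2\pi i [Q_{m,j,x}((n-j)/r) + \ell n/r]}$, whose phases remain real polynomials of degree at most $d$, so \eqref{l2} disposes of each term. The main (mild) obstacle is this last piece of bookkeeping: one must confirm that the polynomial phases produced by the unipotent expansion on $\T^d$ never exceed degree $d$, which is precisely the statement that the lower central filtration of the underlying nilpotent structure has length at most $d$.
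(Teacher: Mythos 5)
Your proposal is correct and follows essentially the same route as the paper: reduce to characters by the Stone--Weierstrass density argument (with the error controlled via \eqref{l1}), then observe, as in Furstenberg's computation recalled in subsection A, that $\chi_m(T^nx)$ has a real polynomial phase of degree at most $d$ so that \eqref{l2} applies. Your explicit treatment of the quasi-unipotent case by passing to $A^r$ and splitting into residue classes modulo $r$ is a detail the paper leaves implicit, and it is carried out correctly.
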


Applying the same reasoning, we have
\begin{thm}\label{quasi}
 The sequence $(c_n)$ of oscillating order $d$ is orthogonal to any quasi-discret system of order $d$.
\end{thm}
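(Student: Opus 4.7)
The plan is to mimic the proof of Theorem \ref{nilsequence} by reducing orthogonality for a general continuous observable to the orthogonality against an exponential polynomial of degree at most $d$, but replacing Furstenberg's explicit computation on the torus by the binomial identity \eqref{binomial} valid in any quasi-discrete system. First I would invoke Hahn-Parry's setup recalled in the excerpt: since the quasi-eigen-functions are continuous and separate points of $X$, the subalgebra $\mathcal{A}\subset C(X)$ they generate is dense in $C(X)$ by Stone-Weierstrass. Moreover, the product of an element of $H_n$ and an element of $H_m$ lies in $H_{\max(n,m)}$ (apply $\Lambda$ repeatedly and use that $\Lambda$ is a group homomorphism of $C(X,\T)$), so $\mathcal{A}$ is in fact the linear span of $\bigcup_{n\le d}H_n$ under the hypothesis that $T$ has quasi-discrete spectrum of order $d$. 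Thus it is enough, by linearity, to prove the orthogonality when $f\in H_n$ for some $0\le n\le d$.

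Fix such an $f\in H_n$ and a point $x\in X$. The key observation is that applying the binomial identity \eqref{binomial},
\[
f(T^k x)=\prod_{j=0}^{n}\bigl(\Lambda^j f\bigr)(x)^{\binom{k}{j}},
\]
one uses the fact that $\Lambda^j f$ is unimodular for each $j\le n$ and that $\Lambda^n f$ is a constant of modulus one (by definition of $H_0$). Writing $\Lambda^j f(x)=e^{2\pi i \phi_j(x)}$ for some real numbers $\phi_j(x)\in\R/\Z$, this becomes
\[
f(T^k x)=\exp\!\Bigl(2\pi i\sum_{j=0}^{n}\phi_j(x)\binom{k}{j}\Bigr)=e^{2\pi i P_x(k)},
\]
where $P_x\in\R_n[z]\subset \R_d[z]$ is a real polynomial in $k$ of degree at most $n\le d$ (depending on $x$, but the base point is fixed throughout the averaging in $k$). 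Now the oscillating property \eqref{l2} applied to $P_x$ directly gives $\frac{1}{N}\sum_{k=1}^{N}c_k f(T^kx)\to 0$.

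To conclude for an arbitrary $f\in C(X)$, I would use the density of $\mathcal{A}$: given $\varepsilon>0$, pick $g\in\mathcal{A}$ with $\|f-g\|_\infty<\varepsilon$. Then
\[
\Bigl|\tfrac{1}{N}\sum_{k=1}^{N}c_k f(T^kx)\Bigr|\le \Bigl|\tfrac{1}{N}\sum_{k=1}^{N}c_k g(T^kx)\Bigr|+\varepsilon\cdot\tfrac{1}{N}\sum_{k=1}^{N}|c_k|.
\]
The first term tends to $0$ by the preceding step, and the second term is $O(\varepsilon)$ uniformly in $N$ thanks to the $L^\lambda$-boundedness \eqref{l1} combined with the power-mean inequality $\frac{1}{N}\sum|c_k|\le\bigl(\frac{1}{N}\sum|c_k|^\lambda\bigr)^{1/\lambda}=O(1)$. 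Letting $\varepsilon\to 0$ finishes the proof.

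The only real obstacle is bookkeeping: one must check that products of quasi-eigen-functions of order $\le d-1$ are again quasi-eigen-functions of order $\le d-1$ (so that the Stone-Weierstrass reduction actually lands inside the range where the binomial formula yields a polynomial of degree $\le d$) and that the degree of the polynomial $P_x$ obtained from the binomial identity never exceeds $d$. Both are immediate from the definition of the filtration $(H_n)$ and from the hypothesis $H_d=H_{d+1}$, so the argument is essentially a synthesis of Hahn-Parry's structure theorem with Furstenberg's polynomial trick already used in Theorem \ref{nilsequence}.
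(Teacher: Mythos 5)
Your proposal is correct and follows essentially the same route as the paper's own proof: reduce by Stone--Weierstrass/density to a single quasi-eigen-function, apply the binomial identity \eqref{binomial} to write $f(T^k x)=f(x)e^{2\pi i P_x(k)}$ with $\deg P_x\le d$, and invoke the oscillating property \eqref{l2}. The only difference is that you spell out the density step in detail (controlling the error term via \eqref{l1} and the power-mean inequality) and verify that products of quasi-eigen-functions stay in the filtration, points the paper leaves implicit under the phrase ``by the density argument.''
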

\begin{proof}
By the density argument, it suffices to establish the orthogonality 
for a functions $f \in G_k,$ $k \geq 0$. Let $f \in G_{k+1}\setminus G_k,$ then, by \eqref{binomial}, for $n \geq k$, $x \in X$, we have 
\begin{eqnarray*}
 f(T^nx)&=&\prod_{j=0}^{k}\Big(\Lambda^jf\Big)^{\binom{n}{j}}(x)\\
 &=&\prod_{j=0}^{k}\Big(\Lambda^jf(x)\Big)^{\binom{n}{j}}\\
 &=&f(x)e^{2 \pi i P(n)},
\end{eqnarray*}
Where $P(n)=\displaystyle \sum_{j=1}^{k}\binom{n}{j} \theta_j$ and  $ \theta_j \in \R$ is such that
$\big(\Lambda^jf\big)(x)=e^{2 \pi i \theta_j}$. Therefore, by \eqref{l2}, it follows that 
$$\frac{1}{N}\sum_{n=0}^{N}c_nf(T^nx) \tend{N}{+\infty}0.$$
This achieve the proof of the theorem.
\end{proof}
At this point let us mention the result of A. Fan \cite{Fan2} and gives its proof.

\begin{thm}[\cite{Fan2}]\label{Fan2}~The sequence $(c_n)$ of oscillating order $d=t.s,$ $t,s \in \N^*$ is orthogonal to any
dynamical sequence of the form $F\big(T^{q_1(n)}x,\cdots,T^{q_k(n)}x\big)$, where $T$ is 
a homeomorphic map on a compact set $X$ with quasi-discret spectrum of order $t$, 
$F$ is a continuous function on $X^k$ and $q_i(n),i=1,\cdots,k$ are a polynomials of
degeree at most $s$.
\end{thm}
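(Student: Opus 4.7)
The plan is to mimic the proof of Theorem \ref{quasi}, reducing the multi-point evaluation on $X^k$ first to a product of quasi-eigenfunctions on $X$, and then expanding each factor via the binomial formula \eqref{binomial}.

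First I would use Stone--Weierstrass on the compact space $X^k$ to approximate $F$ uniformly by finite linear combinations of tensor products $f_1(x_1)\cdots f_k(x_k)$ with $f_i\in C(X)$. Since condition \eqref{l1} combined with H\"older's inequality gives $\tfrac{1}{N}\sum_{n=1}^{N}|c_n|=O(1)$, any uniform approximation of $F$ perturbs the Ces\`aro average $\tfrac{1}{N}\sum_{n=1}^{N} c_n\,F(T^{q_1(n)}x,\ldots,T^{q_k(n)}x)$ by only a small error, so we may assume $F=\prod_{i=1}^{k} f_i$. A second density step, using the quasi-discrete spectrum hypothesis of order $t$, lets us further replace each $f_i$ by an element of $G_{r_i}$ with $r_i \le t+1$, because the algebra generated by the quasi-eigenfunctions is closed under conjugation, separates the points of $X$, contains the constants, and is therefore uniformly dense in $C(X,\T)$.

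Next I would expand each factor via \eqref{binomial}. Setting $k_i=r_i-1\le t$ and fixing $x\in X$,
\[
f_i(T^{q_i(n)}x)=\prod_{j=0}^{k_i}\bigl(\Lambda^j f_i(x)\bigr)^{\binom{q_i(n)}{j}}
= f_i(x)\cdot e^{2\pi i P_i(n;\,x)},
\]
where $P_i(n;x)=\sum_{j=1}^{k_i}\theta_{i,j}(x)\binom{q_i(n)}{j}$ with $\Lambda^j f_i(x)=e^{2\pi i\theta_{i,j}(x)}$ for $j\ge 1$. Since $\binom{q_i(n)}{j}$ is a polynomial in $n$ of degree $j\cdot\deg(q_i)\le t\cdot s=d$, the same holds for $P_i(\cdot;x)$. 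Multiplying over $i$,
\[
F\bigl(T^{q_1(n)}x,\ldots,T^{q_k(n)}x\bigr)=G(x)\cdot e^{2\pi i P(n;\,x)},
\]
with $G(x)=\prod_i f_i(x)$ and $P(\cdot;x)\in\R_d[n]$ of degree at most $d$. Property \eqref{l2} applied pointwise in $x$ then forces the Ces\`aro average to tend to zero, which is the desired orthogonality.

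The main obstacle is the two successive density reductions: one must verify that the Stone--Weierstrass approximation on $X^k$ and the quasi-eigenfunction approximation on $X$ can both be carried out uniformly, with the resulting errors genuinely absorbed by the $O(1)$ bound on $\tfrac{1}{N}\sum|c_n|$. A minor technical point is that \eqref{binomial} is formally stated for exponents $n\ge k$, but since $|q_i(n)|\to\infty$ whenever $q_i$ is non-constant (and constant $q_i$ contribute a factor of $x$ independent of $n$ that is harmless), only finitely many $n$ are affected and the asymptotic conclusion is unchanged.
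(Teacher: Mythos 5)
Your proposal follows essentially the same route as the paper's own proof: reduce by density to product functions $f_1(x_1)\cdots f_k(x_k)$ with each $f_i$ a quasi-eigenfunction, expand each factor via the binomial identity \eqref{binomial} to extract a phase $e^{2\pi i Q(n)}$ with $\deg Q\le t\cdot s=d$, and conclude from \eqref{l2}. The paper states the density step in one line where you spell out Stone--Weierstrass, the $O(1)$ bound from \eqref{l1}, and the small-$n$ caveat in \eqref{binomial}, but these are elaborations of the same argument, not a different one.
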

\begin{proof}By density argument  it is suffices to check the orthogonality for the function $F$ of the form 
$$F(x_1,x_2,\cdots,x_k)=f_1(x_1).f_2(x_2)\cdots f_k(x_k),$$
where $f_i,i=1,\cdots,k$ are a eigen-functions. In the same manner as before, we apply \eqref{binomial} to get
\begin{eqnarray*}
 &&\frac{1}{N}\sum_{n=0}^{N-1}c_n F\big(T^{q_1(n)}x,\cdots,T^{q_k(n)}x\big)\\
&=&F(x,\cdots,x)\frac{1}{N}\sum_{n=0}^{N-1}c_n e^{2\pi i Q(n)},
\end{eqnarray*}
where $Q(n)$ is a polynomials with at most $d$ degree. We thus conclude that 
$$
\frac{1}{N}\sum_{n=1}^{N-1}c_n F\big(T^{q_1(n)}x,\cdots,T^{q_k(n)}x\big) \tend{N}{+\infty}0.$$
The proof of the theorem is complete.
\end{proof}

The poof of \eqref{MOMO} can be adapted to obtain the following
\begin{thm} For any dynamical flow $(X,T)$, for any continuous function $f$, 
for any $x\in X$, for any $\alpha \neq 0$ and for almost all $\beta>1$, we have
\begin{eqnarray*}
 &&\frac{1}{M}\sum_{M \leq m \leq 2M}\Big|\frac{1}{H}\sum_{m \leq h <m+H} f(T^hx) e^{2 \pi i \bfu_{\alpha,\beta}(h)}\Big| \\
&&\tend{H}{+\infty,~~ H/M \rightarrow 0 }0. 
\end{eqnarray*}
\end{thm}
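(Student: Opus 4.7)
The plan is to adapt the short-interval $L^2(d\beta)$ estimate behind \eqref{MOMO}, with the bounded dynamical weight $f(T^hx)$ entering harmlessly because it does not depend on $\beta$ and is uniformly bounded by $\|f\|_\infty$. Set
$$S_\beta(m):= \frac{1}{H}\sum_{m\le h<m+H} f(T^hx)\,e^{2\pi i\,\bfu_{\alpha,\beta}(h)}.$$
By the Cauchy--Schwarz inequality in $m$,
$$\Big(\frac{1}{M}\sum_{M\le m\le 2M}|S_\beta(m)|\Big)^{2}\le \frac{1}{M}\sum_{M\le m\le 2M}|S_\beta(m)|^{2},$$
so it is enough to prove that $A(H,M,\beta):=\frac{1}{M}\sum_{M\le m\le 2M}|S_\beta(m)|^{2}$ tends to zero for a.e.\ $\beta>1$ as $H\to\infty$ with $H/M\to 0$.

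Next, I fix a bounded interval $1<a<b$ and integrate $A(H,M,\beta)$ over $\beta\in[a,b]$. Expanding the modulus squared and using $|f(T^hx)|\le\|f\|_\infty$ gives
\begin{align*}
&\int_a^b A(H,M,\beta)\, d\beta\\
&\le \frac{\|f\|_\infty^{2}}{MH^{2}}\sum_{m}\sum_{h_1,h_2\in[m,m+H)}\Big|\int_a^b e^{2\pi i(\bfu_{\alpha,\beta}(h_1)-\bfu_{\alpha,\beta}(h_2))}d\beta\Big|.
\end{align*}
The inner oscillatory integral is precisely the object estimated in the proof of the key inequality supporting \eqref{MOMO}: the diagonal $h_1=h_2$ contributes $(b-a)/H$, while the off-diagonal terms, after integration by parts in $\beta$ using the strong lower bound on $\partial_\beta(\bfu_{\alpha,\beta}(h_1)-\bfu_{\alpha,\beta}(h_2))$ coming from the exponential growth of $\beta^{h}$ for $\beta>1$, contribute at most $C\log(3H)/H$. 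Consequently
$$\int_a^b A(H,M,\beta)\, d\beta\;\le\; \|f\|_\infty^{2}\Big(\frac{b-a}{H}+\frac{C\log(3H)}{H}\Big),$$
a bound that is independent of $M$ and tends to zero as $H\to\infty$.

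To upgrade this $L^1(d\beta)$ estimate to the almost-sure statement, I invoke a Borel--Cantelli argument along the dyadic subsequence $H=2^{k}$: since $\sum_{k}\log(3\cdot 2^{k})/2^{k}<\infty$, Markov's inequality gives that for a.e.\ $\beta\in[a,b]$ the quantity $A(2^{k},M,\beta)$ tends to zero uniformly in $M\ge 2^{k}$. Taking a countable union over rational sub-intervals $[a,b]\subset(1,\infty)$ handles the whole half-line, and a routine comparison between an arbitrary $H$ and its dyadic floor, based on the trivial bound $|S_\beta(m)|\le\|f\|_\infty$, extends the convergence to every integer sequence with $H\to\infty$ and $H/M\to 0$.

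The main obstacle, entirely analogous to the unweighted situation, is the oscillatory-integral estimate of the inner $\int_a^b e^{2\pi i(\bfu_{\alpha,\beta}(h_1)-\bfu_{\alpha,\beta}(h_2))}d\beta$ already used in the proof of \eqref{MOMO}; once it is available, the bounded dynamical factor $f(T^hx)$ contributes only the multiplicative constant $\|f\|_\infty^{2}$ and the rest of the proof transcribes without modification from the argument for \eqref{MOMO}.
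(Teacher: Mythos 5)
Your proposal is correct and follows essentially the same route as the paper, whose entire proof consists of the remark that the argument for \eqref{MOMO} can be adapted: you carry out exactly that adaptation, reducing via Cauchy--Schwarz to the key $L^{2}(d\beta)$ inequality on a bounded interval $[a,b]$ and observing that the weight $f(T^{h}x)$, being independent of $\beta$ and bounded, only contributes the factor $\|f\|_{\infty}^{2}$. The one step stated more confidently than it is justified is the uniformity in $M$ of the Borel--Cantelli conclusion (a two-parameter family of events whose Chebyshev bounds do not sum directly; one needs the reduction to disjoint blocks as in \eqref{eq:MOMO1} or an interpolation in $M$), but this is a standard metric-theoretic technicality that the paper likewise leaves implicit.
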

We thus get 
\begin{Cor}
For any nilsequence $(b_n) \subset \C$, for any $\alpha \neq 0$ and for almost all $\beta>1$, we have
\begin{eqnarray*}
 &&\frac{1}{M}\sum_{M \leq m \leq 2M}\Big|\frac{1}{H}\sum_{m \leq h <m+H} b_h e^{2 \pi i \bfu_{\alpha,\beta}(h)}\Big|
\\
&& \tend{H}{+\infty,~~ H/M \rightarrow 0 }0.
\end{eqnarray*}

\end{Cor}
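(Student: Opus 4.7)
The plan is to derive this corollary from the preceding theorem via the definition of nilsequences as uniform limits of basic nilsequences together with a standard approximation argument. The crucial observation is that the preceding theorem is stated for an arbitrary dynamical flow $(X,T)$ and an arbitrary continuous function, so in particular it applies verbatim to any nilsystem $(G/\Gamma, T_g)$ with $F \in C(G/\Gamma)$.

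First I would treat basic nilsequences. If $(b_n)$ is a basic $k$-step nilsequence, then by definition there exist a nilpotent Lie group $G$, a cocompact lattice $\Gamma$, elements $g \in G$ and $x \in G/\Gamma$, and a continuous function $F$ on $X := G/\Gamma$ such that $b_n = F(T_g^n x)$. Applying the preceding theorem to the dynamical flow $(X, T_g)$, to the continuous function $F$, and to the point $x$ immediately yields a full-measure set $A_b \subset (1,+\infty)$ of parameters $\beta$ on which the short-interval double average vanishes.

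To extend this to a general nilsequence, recall that $(b_n)$ is a uniform limit of basic nilsequences: choose $(b_n^{(k)})_{k\geq 1}$ basic with $\|b - b^{(k)}\|_\infty \leq 1/k$. Step 1 produces a full-measure set $A_k \subset (1,+\infty)$ corresponding to each $b^{(k)}$, and the countable intersection $A := \bigcap_{k\geq 1} A_k$ is still of full Lebesgue measure. Fix $\beta \in A$ and $\alpha \neq 0$. Using that $|e^{2\pi i \bfu_{\alpha,\beta}(h)}| = 1$ and the triangle inequality,
\[
\Big|\frac{1}{H}\sum_{m \leq h < m+H} b_h e^{2 \pi i \bfu_{\alpha,\beta}(h)}\Big| \leq \Big|\frac{1}{H}\sum_{m \leq h < m+H} b_h^{(k)} e^{2 \pi i \bfu_{\alpha,\beta}(h)}\Big| + \frac{1}{k}
\]
for every $m$ and every $H$. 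Averaging in $m$ over $M \leq m \leq 2M$ and letting $H \to +\infty$ with $H/M \to 0$, the first term on the right tends to $0$ by Step 1, so the limsup of the left side is at most $1/k$. Letting $k \to \infty$ gives the conclusion.

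The main obstacle, such as it is, is purely bookkeeping: the preceding theorem yields a full-measure set of $\beta$ that depends on the dynamical flow, so one must verify that after passing to the countable intersection corresponding to the approximating basic nilsequences $(b^{(k)})$ the admissible set of $\beta$ remains of full measure. Countability of the family $(b^{(k)})$ makes this automatic, so no genuine analytic difficulty arises beyond what is already supplied by the preceding theorem.
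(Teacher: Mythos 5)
Your proof is correct and follows exactly the route the paper intends: the paper derives the corollary from the preceding theorem with no further argument (``We thus get''), the point being that a basic nilsequence $b_n=F(T_g^nx\Gamma)$ is a dynamical sequence from the flow $(G/\Gamma,T_g)$ to which the theorem applies, and general nilsequences follow by uniform approximation. Your explicit handling of the countable intersection of full-measure sets of $\beta$ is the only nontrivial bookkeeping, and you do it correctly.
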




Notice that the tools applied here allows one to obtain a generalization of a results of \cite{Fan}, 
\cite{Jiang} and \cite{Huang} to the case of quasi-discrete spectrum. Let us notice further that in \cite{ab-k-m-r} the authors 
established a criterion of M\"{o}bius disjointness for uniquely ergodic systems. This criterion yields 
that Sarnak's conjecture holds for any 
topological model for a class of uniquely ergodic dynamical systems including quasi-unipotent nilsystems. As in \cite{Huang}, 
the fondamental ingredients is based on the short intervall orthogonality of the M\"{o}bius function to the rotation on the circle. 
This later result extends Theorem of Davenport-Hua \cite{Davenport}, \cite[Theorem 10.]{Hua} (see also \cite{Liu-Sarnak}) which say that, 
for any $k \geq1$, for any $\epsilon>0$, 
$$\sup_{\theta}\Big|\frac1N\sum_{n=1}^{N}\mob(n)e^{2 \pi i n^k \theta}\Big| \leq \frac{C_\epsilon}{\log(N)^\epsilon}.$$
For the recent proof of the short intervall orthogonality of the M\"{o}bius function to the rotation on the circle, we refer
to \cite{HuangII} and the references therein. Let us 
notice that  in the spirit of Davenport-hua's theorem, \linebreak E. H. El Abdalaoui and X. Ye proved in \cite{elabalYe} that for any 
For any integer
$N\geq 2$ and  for any 
$\epsilon>0$, we have 
$$\sup_{a \in \ell^{\infty},\big\|a\big\|_{\infty}\leq 1}
 \Big(\frac{1}{N}\sum_{n=1}^{N}\Big|\frac{1}{N}\sum_{l=1}^{N}\mob(l+n)a_l\Big|\Big)$$$$
 \leq \frac{C_\epsilon}{\big(\log(2N)\big)^{\epsilon}},
$$
where $C_\epsilon$ is an absolutely constant which depend only on $\epsilon$. 
The proof is based on Bourgain's observation and Parseval-Bessel inequality.\\

We remind that the authors in \cite{Huang} used essentially the 
the following estimation due to Matom\"aki, Radziwi\l\l~and Tao \cite{Ma-Ra-Ta}: for any integer
$N,L \geq 10$ , for any 
$\epsilon>0$, we have 
\begin{eqnarray*}
 &&\sup_{\beta \in \T}\Big(\frac{1}{N}\sum_{n=1}^{N}\Big|\frac{1}{L}\sum_{l=1}^{L}\mob(l+n)e^{2 \pi i \beta l}\Big|\Big)\\
 &&\leq C_\epsilon \Big(\frac{1}{\big(\log(N)\big)^{\epsilon}}+\frac{\log(\log(L))}{\log(L)}\Big),
\end{eqnarray*}
where $C_\epsilon$ is an absolutely constant which depend only on $\epsilon$.\\ 

Here in the same spirit as in \cite{elabalYe}  we establish the following\\

\begin{thm}\label{Huang-plus} For any integers
$N,L \geq 2$, $k \geq 1$ and for any 
$\epsilon>0$, we have 
\begin{eqnarray*}
&& \sup_{\beta \in \T}\Big(\frac{1}{N}\sum_{n=1}^{N}\Big|\frac{1}{L}\sum_{l=1}^{L}\mob(l+n)e^{2 \pi i \beta l^k}\Big|\Big)\\ 
&\leq &\frac{C_\epsilon}{\big(\log(L+N)\big)^{\epsilon}}\big(L+N\big)\sqrt{\frac{1}{NL}},
\end{eqnarray*}
where $C_\epsilon$ is an absolutely constant which depend only on $\epsilon$.\\ 
\end{thm}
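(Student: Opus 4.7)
The plan is to follow the Cauchy-Schwarz + Plancherel + Davenport-Hua strategy used in the proof of the theorem in \cite{elabalYe}. Since the right-hand side is of the form $(L+N)\sqrt{1/(NL)}$---the square root of something $L^2$-like---the natural first move is to pass from the $\ell^1$ average over $n$ to an $\ell^2$ average via Cauchy-Schwarz.

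Concretely, the first step is to apply Cauchy-Schwarz in the outer sum, reducing the problem to proving
$$\frac{1}{NL^2}\sum_{n=1}^{N}\Big|\sum_{l=1}^{L}\mob(l+n)\,e^{2\pi i \beta l^k}\Big|^2 \leq \frac{C_\epsilon^2\,(L+N)^2}{NL\,\log^{2\epsilon}(L+N)}.$$
The second step is to rewrite the inner sum as a convolution. Put $b(j) := e^{2\pi i \beta j^k}\,\1_{[1,L]}(j)$ and $\mu(m) := \mob(m)\,\1_{[1,N+L]}(m)$; the substitution $m = l+n$ shows that, for $n \in [1,N]$,
$$T_n(\beta) := \sum_{l=1}^{L}\mob(l+n)\,e^{2\pi i \beta l^k} = \sum_{m \in \Z}\mu(m)\,b(m-n) = (\mu \ast \check{b})(n),$$
where $\check{b}(j) := b(-j)$. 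Extending the sum over $n$ to all of $\Z$ (which only enlarges the left-hand side) and applying Plancherel on $\T$ yields
$$\sum_{n \in \Z}|T_n(\beta)|^2 = \int_0^1 |\hat{\mu}(\theta)|^2\,|\hat{b}(-\theta)|^2\,d\theta \leq \Big(\sup_\theta|\hat{\mu}(\theta)|\Big)^2 \int_0^1 |\hat{b}(\theta)|^2\,d\theta.$$
The third step is to invoke Davenport-Hua with $k=1$ to bound the first factor, $\sup_\theta|\hat{\mu}(\theta)| \leq C_\epsilon(L+N)/\log^\epsilon(L+N)$, and to note that the second factor is simply $\|b\|_2^2 = L$ by Plancherel applied to $b$. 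Dividing by $NL^2$ and taking a square root then produces exactly the desired bound, uniformly in $\beta$.

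The main conceptual observation---which also makes the argument so clean---is that the monomial phase $e^{2\pi i \beta l^k}$ never directly interacts with the M\"obius function: it is absorbed into the amplitude sequence $b$ and enters the estimate only through its $L^2$-mass $\|b\|_2^2 = L$, a quantity independent of $k$. Consequently the deep arithmetic ingredient reduces to the linear ($k=1$) Davenport bound on $\sum_{m \leq M} \mob(m)\,e^{2\pi i \gamma m}$, which also explains why $k$ does not appear on the right-hand side. The only points requiring care are bookkeeping---truncating $\mob$ to $[1, N+L]$, handling the flipped sequence $\check{b}$ on the Fourier side, and noting that enlarging the range of $n$ to $\Z$ in Plancherel provides a valid upper bound for the partial sum over $n \in [1,N]$---none of which is a genuine obstacle.
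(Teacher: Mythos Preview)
Your proof is correct and follows essentially the same strategy as the paper: Cauchy--Schwarz in $n$, a Fourier/Plancherel step to decouple the M\"obius part from the phase $e^{2\pi i\beta l^k}$, and the Davenport bound on $\sup_\gamma\big|\sum_{m\le L+N}\mob(m)e^{2\pi i\gamma m}\big|$. The paper phrases the $\ell^2$ step via the duality $\|a\|_2=\sup_{\|b\|_2=1}|\langle a,b\rangle|$, a sum-swap, a second Cauchy--Schwarz over $l$, and Bessel--Parseval, whereas you package the same computation as a convolution $(\mu\ast\check{b})(n)$ and apply Plancherel directly; the two arguments are equivalent, yours being the more streamlined formulation.
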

Notice that in the particular case $N=L$ we get
\begin{eqnarray*}
 &&\sup_{\beta \in \T}\Big(\frac{1}{N}\sum_{n=1}^{N}\Big|\frac{1}{L}\sum_{l=1}^{L}\mob(l+n)e^{2 \pi i \beta l^k}\Big|\Big)\\
 &\leq& \frac{C_\epsilon}{\big(\log(L)\big)^{\epsilon}}\frac{2L}{L} \leq \frac{2.C_\epsilon}{\big(\log(2L)\big)^{\epsilon}}
\end{eqnarray*}
where $C_\epsilon$ is an absolutely constant which depend only on $\epsilon$. The proof of Theorem \ref{Huang-plus} is based on Bourgain's observation \cite{Bourgain-D} 
and the classical Bessel-Parseval inequality.\\

Before given the proof of Theorem \ref{Huang-plus}, let us present the proof of Theorem \ref{main2} .

\begin{proof}[{\textbf{Proof of Theorem \ref{main2}.}}]Let $\alpha$ and $\beta$ be a rationally independent irrational numbers 
and consider the sequence  $u(n)=n\alpha [n \beta]$ mod $1$. Then, by Lemma 3.6 from \cite{Tao-Green-ZU4}, $(u(n))$ is an almost nilsequence. 
Furthermore, by Bergelson-Leibman's result \cite{Leib},
for any polynomial $P \in \R[x]$, we have
$$\frac{1}{N}\sum_{n=1}^{N}\exp\Big( 2\pi i \big(u(n)-P(n)\big)\Big) \tend{N}{+\infty}0.$$
 Therefore the sequence $(u(n))$ mod $1$ is a higher oscillating sequence. The proof of the theorem is complete.
\end{proof}

Applying the same method as in \cite{Leib}, one can exhibit more large class of almost nilsequences which are a higher oscillating sequences.\\

\begin*{Proof of Theorem \ref{Huang-plus}.} Let $N, L \geq 2$, $k \geq 1$ and $\epsilon>0$. Then, by Cauchy-Schwarz inequality, we have
\begin{eqnarray*}
&& \frac{1}{N}\sum_{n=1}^{N}\Big|\frac{1}{L}\sum_{l=1}^{L}\mob(l+n)e^{2 \pi i \beta l^k}\Big| \\
 &\leq& \Big(\frac{1}{N}\sum_{n=1}^{N}\Big|\frac{1}{L}\sum_{l=1}^{L}\mob(l+n)e^{2 \pi i \beta l^k}\Big|^2\Big)^{\frac12}.
\end{eqnarray*}
Therefore it suffices to estimate the RHS. For that, observe that by the classical Hilbert space analysis trick  we have
\begin{eqnarray*}
\sup_{\|\bb\|_2=1}\Big|\frac{1}{N}\sum_{n=1}^{N}a_n \overline{b_n}\Big|=\|\aaa\|_2,
\end{eqnarray*}
where $\bb=(b_n)_{n=1}^{N},
\aaa=(a_n)_{n=1}^{N},$ and $\|.\|_2$ is the classical euclidienne norm on $\C^N$. Let $\bb \in \C^N$ such that $\|\bb\|_2=1$. We thus need 
only to esitmate the following
$$ \Big|\frac{1}{N}\sum_{n=1}^{N} \Big(\frac{1}{L}\sum_{l=1}^{L}\mob(l+n)e^{2 \pi i \beta l^k}\Big)
\overline{b_n}\Big|.$$
But 
\begin{eqnarray*}
 &&\frac{1}{N}\sum_{n=1}^{N}\Big(\frac{1}{L}\sum_{l=1}^{L}\mob(l+n)e^{2 \pi i \beta l^k}\Big)\overline{b_n}\\
 &=&\frac{1}{L}\sum_{l=1}^{L}\Big(\frac{1}{N}\sum_{n=1}^{N}\mob(l+n)\overline{b_n}\Big).e^{2 \pi i \beta l^k}\Big),
\end{eqnarray*}
and by the triangle inequality 
\begin{eqnarray*}
 &&\Big|\frac{1}{N}\sum_{n=1}^{N}\Big(\frac{1}{L}\sum_{l=1}^{L}\mob(l+n)e^{2 \pi i \beta l^k}\Big)\overline{b_n}\Big|\\
 &\leq& \frac{1}{L}\sum_{l=1}^{L}\Big|\frac{1}{N}\sum_{n=1}^{N}\mob(l+n)\overline{b_n}\Big|
\end{eqnarray*}
Whence, once again by Cauchy-Shwarz inequality, we have
\begin{eqnarray*}
 &&\Big|\frac{1}{N}\sum_{n=1}^{N}\Big(\frac{1}{L}\sum_{l=1}^{L}\mob(l+n)e^{2 \pi i \beta l^k}\Big)\overline{b_n}\Big|\\
 &\leq& \Big(\frac{1}{L}\sum_{l=1}^{L}\Big|\frac{1}{N}\sum_{n=1}^{N}\mob(l+n)\overline{b_n}\Big|^2\Big)^{\frac12}.
 \end{eqnarray*}
We further have  
\begin{eqnarray*}
 &&\sum_{l=1}^{L}\Big|\frac{1}{N}\sum_{n=1}^{N}\mob(l+n)\overline{b_n}\Big|^2\\ 
 &&\leq 
 \sum_{l=1}^{L}\Big| \int_{\T}\Big(\frac{1}{N}\sum_{m=1}^{L+N}\mob(m)z^{m}\Big) \Big(\sum_{n=1}^{N}\overline{b_n} 
 z^{-n}\Big) z^{-l}
 dz \Big|^2 
\end{eqnarray*}
Consequently, by Bessel-Parseval inequality, we obtain
\begin{eqnarray*}
&& \sum_{l=1}^{L}\Big|\frac{1}{N}\sum_{n=1}^{N}\mob(l+n)\overline{b_n}\Big|^2\\
 &\leq& \int \Big|\frac{1}{N}\sum_{m=1}^{L+N}\mob(m)z^{m}\Big|^2 \Big|\sum_{n=1}^{N} \overline{b_n} z^{-n}\Big|^2 dz\\
 &\leq& \sup_{z \in \T} \Big|\frac{1}{N}\sum_{m=1}^{L+N}\mob(m)z^{m}\Big|^2 \Big| .
 \Big(\sum_{n=1}^{N} |b_n|^2 \Big)\\
 &\leq& \sup_{z \in \T} \Big|\frac{1}{N}\sum_{m=1}^{L+N}\mob(m)z^{m}\Big|^2.N
\end{eqnarray*} 
Furthermore, by applying Davenport-Hua theorem, we get
\begin{eqnarray*}
&& \sum_{l=1}^{L}\Big|\frac{1}{N}\sum_{n=1}^{N}\mob(l+n)\overline{b_n}\Big|^2\\
 &\leq& \frac{C_\epsilon^2}{\big(\log(L+N)\big)^{2 \epsilon}}.\Big(\frac{L+N}{N}\Big)^2 .N,
\end{eqnarray*} 
where $C_\epsilon$ is an absolutely constant which depend only on $\epsilon$. We thus conclude that we have
\begin{eqnarray*}
&& \sup_{\beta \in \T}\Big(\frac{1}{N}\sum_{n=1}^{N}\Big|\frac{1}{L}\sum_{l=1}^{L}\mob(l+n)e^{2 \pi i \beta l^k}\Big|\Big)\\
 &\leq& \frac{C_\epsilon}{\big(\log(L+N)\big)^{\epsilon}}.\big(L+N\big).\sqrt{\frac{1}{NL}}.
\end{eqnarray*}
This finish the proof of the Theorem.
\end*{\hfill $\Box$}

\begin*{\textbf{Remark.}} Let us 
notice that our proof yields that for any  integer $N,L \geq 2$ and for any 
$\epsilon>0$, we have 
$$\sup_{a \in \ell^{\infty},\big\|a\big\|_{\infty}\leq 1}
 \Big(\frac{1}{N}\sum_{n=1}^{N}\Big|\frac{1}{L}\sum_{l=1}^{L}\mob(l+n)a_l\Big|\Big)$$$$
 \leq \frac{C_\epsilon}{\big(\log(L+N)\big)^{\epsilon}}.\big(L+N\big).\sqrt{\frac{1}{NL}},
$$
where $C_\epsilon$ is an absolutely constant which depend only on $\epsilon$.\\ 
\end*{}

\subsection*{\bf{II. Gowers norms.}}
The notion of nilsequences is closely related to the notion of Gowers norms. These norms were introduced by T. Gowers in \cite{Gowers}
. Therein T. Gowers produced a new proof of Szemer\'edi's theorem. Nowadays, it is turn out that Gowers uniform norms are 
tools of great use in additive number theory, arithmetic combinatorics and ergodic theory.\\

Let $d \geq 1$ and $C_d=\{0,1\}^d$. Let $(G,+)$ be a local compact Abelian group. If $\bh \in G^d$ and $\bc \in C_d$, then 
$\bc.\bh=\sum_{i=1}^{d}c_ih_i.$  Let ${\big(f_{\bc}\big)}_{\bc \in C_d}$ be a family of bounded functions that are compactly supported, 
that is, for each $\bc \in C_d$, $f_{\bc}$ is in $L_c^{\infty}(G)$ the subspace of functions that are compactly supported. 
The Gowers inner product is given by 
$$\Big\langle \big(f_{\bc}\big)\Big \rangle_{U^d(G)}=\int_{G^{d+1}}\prod_{\bc \in C_d}\Cc^{|\bc|}f_{\bc}(g+\bc.\bh) d\bh dg
,$$
where $|\bc|=\bc.\boldsymbol{1}$, $\boldsymbol{1}=(1,1,\cdots,1) \in C_d$ and $\Cc$ is the conjugacy anti-linear operator.
If all $f_{\bc}$ are the same function $f$ then the Gowers uniform norms of $f$ is defined by
$$\big|\big|f\big|\big|_{U^d(G)}^{2^d}=\Big\langle \big(f\big)\Big \rangle_{U^d(G)}.$$
The fact that $\big|\big|.\big|\big|_{U^d(G)}$ is a norm for $d \geq 2$ follows from the following 
generalization of Cauchy-Bunyakovski-Schwarz inequality for the Gowers inner product.
\begin{prop}(Cauchy-Bunyakovskii-Gowers-Schwarz inequality)\label{Tao}
 $$ \Big\langle \big(f_c\big)\Big \rangle_{U^d(G)} \leq \prod_{c \in C_q}\big|\big|f_c\big|\big|_{U^d(G)}.$$
\end{prop}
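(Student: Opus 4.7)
The plan is to prove the Cauchy--Bunyakovskii--Gowers--Schwarz inequality by a $d$-fold iteration of the ordinary Cauchy--Schwarz inequality, each application duplicating the functions in one coordinate $h_i$, until every function is replaced by its $U^d(G)$-norm. This is the standard Gowers--Cauchy--Schwarz induction.

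First I would write $\bc=(\bc',c_d)\in C_{d-1}\times\{0,1\}$ and $\bh=(\bh',h_d)\in G^{d-1}\times G$ and separate the product in the integrand of $\big\langle(f_\bc)\big\rangle_{U^d(G)}$ according to whether $c_d=0$ or $c_d=1$. Using translation invariance of Haar measure on $G$ (substitute $g'':=g+h_d$ and integrate in $g$ and $g''$ in place of $g$ and $h_d$), the $g$- and $h_d$-integrations decouple, giving
\[
\Big\langle (f_\bc)\Big\rangle_{U^d(G)}\;=\;\int_{G^{d-1}}\Phi_0(\bh')\,\overline{\Psi_1(\bh')}\,d\bh',
\]
where $\Phi_0(\bh'):=\int_G\prod_{\bc'\in C_{d-1}}\Cc^{|\bc'|}f_{(\bc',0)}(g+\bc'.\bh')\,dg$ and $\Psi_1(\bh')$ is defined analogously from the functions $f_{(\bc',1)}$. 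The complex conjugate appearing on $\Psi_1$ comes from $|(\bc',1)|=|\bc'|+1$ and the identity $\Cc^{k+1}f=\overline{\Cc^k f}$.

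Second, I would apply the classical Cauchy--Schwarz inequality in $L^2(G^{d-1})$ to obtain
\[
\Big|\Big\langle(f_\bc)\Big\rangle_{U^d(G)}\Big|^2\;\leq\;\|\Phi_0\|_{L^2(G^{d-1})}^2\cdot\|\Psi_1\|_{L^2(G^{d-1})}^2.
\]
Reversing the change of variables used to decouple shows that $\|\Phi_0\|_{L^2(G^{d-1})}^2=\big\langle(F_\bc)\big\rangle_{U^d(G)}$, where the duplicated family is $F_{(\bc',0)}=F_{(\bc',1)}:=f_{(\bc',0)}$; similarly $\|\Psi_1\|_{L^2(G^{d-1})}^2=\big\langle(F'_\bc)\big\rangle_{U^d(G)}$ with $F'_{(\bc',c_d)}:=f_{(\bc',1)}$. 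Thus a single Cauchy--Schwarz step bounds the original inner product by the geometric mean of two Gowers inner products in each of which the functions no longer depend on the last index~$c_d$. Iterating the same duplication argument along $h_{d-1},\ldots,h_1$, each step squaring the exponent on the left and halving the number of distinct functions among the $2^d$ slots on the right, after $d$ steps we reach
\[
\Big|\Big\langle(f_\bc)\Big\rangle_{U^d(G)}\Big|^{2^d}\;\leq\;\prod_{\bc\in C_d}\Big\langle(f_\bc,\ldots,f_\bc)\Big\rangle_{U^d(G)}\;=\;\prod_{\bc\in C_d}\|f_\bc\|_{U^d(G)}^{2^d},
\]
by the very definition of $\|\cdot\|_{U^d(G)}$; taking $2^d$-th roots yields the claim.

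The main technical obstacle is the combinatorial bookkeeping of the parities $\Cc^{|\bc|}$ under the successive duplications: at every stage one must verify that the duplicated family produces an integrand of \emph{precisely} the Gowers form, with the correct conjugation on each of the $2^d$ factors. The crucial identity making this work at each step is $\Cc^{|(\bc',1)|}f=\overline{\Cc^{|(\bc',0)|}f}$, which guarantees that the two halves $\Phi$ and $\Psi$ pair as an honest $L^2$-inner product rather than with mismatched conjugation, so that Cauchy--Schwarz can be applied as written.
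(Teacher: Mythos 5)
Your proposal is correct and follows essentially the same route as the paper: a single Cauchy--Schwarz step in one coordinate bounds the Gowers inner product by the geometric mean of two Gowers inner products of duplicated families, and iterating over all $d$ coordinates yields $\bigl|\langle(f_{\bc})\rangle_{U^d(G)}\bigr|^{2^d}\le\prod_{\bc}\|f_{\bc}\|_{U^d(G)}^{2^d}$. The only difference is one of detail: you make explicit the change of variables $g''=g+h_d$ that decouples the two halves and the parity identity $\Cc^{|(\bc',1)|}f=\overline{\Cc^{|(\bc',0)|}f}$, both of which the paper's two-line sketch leaves implicit.
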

The proof of Cauchy-Bunyakovskii-Gowers-Schwarz inequality can be obtain easily by applying inductively Cauchy-Bunyakovskii-Schwarz inequality.
Indeed, it easy to check that we have
$$ \Big\langle \big(f_c\big)\Big \rangle_{U^d(G)} \leq \prod_{j=0,1}
\Big|\langle \big(f_{\pi_{i,j}}(\bc)\big)\Big \rangle_{U^d(G)}\Big|^{\frac{1}{2}},$$
For all $i=1,\cdots, d-1$, where $\pi_{i,j}(\bc) \in C_d$ is formed from $\bc$ by replacing  the $i^{\rm{th}}$ coordinates with $j$. 
Iterated this, we obtain the complete proof of Proposition \ref{Tao}.\\

Combining Cauchy-Bunyakovskii-Gowers-Schwarz inequality with the binomial formula and the multilinearity of the Gowers inner
product one can easily check that the triangle inequality for $\big|\big|.\big|\big|_{U^d(G)}$ holds.  We further have 
$$\big|\big|f\big|\big|_{U^d(G)} \leq \big|\big|f\big|\big|_{U^{d+1}(G)},$$
by applying Cauchy-Bunyakovskii-Gowers-Schwarz inequality with $f_{\bc}=1$ if $\bc_d=0$ and $f_{\bc}=1$ if $\bc_d=1$. 
The Gowers norms are also invariant under the shift and conjugacy.\\

If $G$ is a finite discrete Abelian group, we define the discrete derivative of function $f :G \rightarrow \C$ by putting
$$\partial_h(f)=f(x+h).\overline{f}(x),$$
for all $h,x \in G$. We can thus write the Gowers norm of $f$ as follows
$$\big|\big|f\big|\big|_{U^d(G)}^{2^d}=\int_{G^{d+1}}\partial_{h_1}\partial_{h_2}\cdots\partial_{h_d}(f)(x) d\bh dx.$$
If further $f$ take values on $\R/\Z$ and $\partial_{h_1}\partial_{h_2}\cdots\partial_{h_{d+1}}(f)(x)=0$, 
for all $h_1,\cdots,h_{d+1},x \in G$, then $f$ is said to be a polynomial function of degree at most $d$. The degree of $f$ is denoted by
$d^{\circ}(f)$.\\

According to this it is easy to see that for any function $f$ and any polynomial function $\phi$ of degree at most $d$, we have
$$\big|\big|e^{2 \pi i \phi(x)}f(x)\big|\big|_{U^d(G)}=\big|\big|f\big|\big|_{U^d(G)}.$$
Therefore
\begin{eqnarray}\label{vander}
 \sup_{\phi,~~d^{\circ}(\phi)\leq d}\Big|\int_G e^{2 \pi i \phi(x)}f(x)dx\Big| \leq \big|\big|f\big|\big|_{U^d(G)}.
\end{eqnarray}

In application and here we need to define the Gowers norms for a bounded function defined on $\big\{1,\cdots,N\big\}$. For that, 
if $f$ is a bounded function defined on $\big\{1,\cdots,N-1\big\}$, Following 
\cite{Green-Tao-Ziegler},  we put
$$ \big|\big|f\big|\big|_{U^d[N]}=
\frac{\big|\big|\widetilde{f}\big|\big|_{U^d(\Z/2^d.N\Z)}}{\big|\big|\1_{[N]}\big|\big|_{U^d(\Z/2^d.N\Z)}},$$
where $\widetilde{f}=f(x).\1_{[N]},$ $x \in \Z/2^d.N\Z$, $\1_{[N]}$ is the indicator function of $\big\{1,\cdots,N\big\}$.
For more details on Gowers norms, we refer to \cite{TaoHfourier},\cite{TaoAdditive}.\\

The sequence $f$ is said to have a small \linebreak Gowers norms if for any $d \geq 1$, 
$$\big|\big|f\big|\big|_{U^d[N]} \tend{N}{+\infty}0.$$
An example of sequences of small Gowers norms that we shall need here is given by Thue-Morse and Rudin-Shapiro sequences. 
This result is due J. Konieczny \cite{Konieczny}.\\

The Thue-Morse and Rudin-Shapiro sequences are a classical sequences both arise from a primitive substitution dynamical systems. 
The Thue-Morse and Rudin-Shapiro  sequences are defined respectively by  
$$ t(n)=e^{2 \pi i s_2(n)},$$
and 
$$r(n)=(-1)^{u_{11}(n)},$$
where $s_2(n)$ is the sum of digits of $n$ in base 2 and $u_{11}(n)$ is the number of ``$11$'' in the 2-adic representation of $n$.\\ 

Precisely, J. Konieczny proved the following
\begin{thm}[\cite{Konieczny}]The Gowers uniform norm of Thue-Morse sequence $t=(t_n)$ ( respectively the Rudin-Shapiro sequence $r=(r_n)$) satisfy: for any
$d \in \N^*$, there exists $c=c(d)>0$ such that $\big\|t\big\|_{U^d[N]}=O(N^{-c})$ ( respectively
$\big\| r\big\|_{U^d[N]}=O(N^{-c})$). 
\end{thm}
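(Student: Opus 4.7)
My plan is to prove both bounds simultaneously by induction on $d$, leveraging the Gowers--Cauchy--Schwarz recurrence
\begin{equation*}
\|f\|_{U^{d+1}[N]}^{2^{d+1}} \;=\; \E_{h} \, \|\partial_h f\|_{U^d[N]}^{2^d} \cdot (1+o(1)),
\end{equation*}
where $\partial_h f(n) = f(n)\overline{f(n+h)}$, a direct consequence of the definition of the Gowers inner product after integrating out the last cube direction. This reduces polynomial decay of the $U^{d+1}[N]$ norm to polynomial decay of the $U^d[N]$ norm of the derivatives $\partial_h f$ for a proportion $1 - O(N^{-c})$ of shifts $h$.

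For the base case $d = 2$ I would invoke the classical Fourier estimates
\begin{equation*}
\sup_{\alpha \in \T} \Big|\frac{1}{N} \sum_{n < N} f(n)\, e^{2\pi i \alpha n}\Big| \;=\; O(N^{-c_1}),
\end{equation*}
valid for $f \in \{t, r\}$, due to Gelfond for Thue--Morse and to Mauduit--Rivat for Rudin--Shapiro. Combined with the Parseval identity $\|f\|_{U^2}^4 = \sum_\xi |\widehat{f}(\xi)|^4 \le \|\widehat{f}\|_\infty^2 \|\widehat{f}\|_2^2$ on the ambient group $\Z/4N\Z$, this supplies the polynomial decay of $\|f\|_{U^2[N]}$.

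The inductive step exploits the digital structure. For Thue--Morse, writing $t(n) = (-1)^{s_2(n)}$ and using the carry identity $s_2(n+h) = s_2(n) + s_2(h) - c_2(n,h)$, where $c_2(n,h)$ is the number of carries in the base-$2$ addition of $n$ and $h$, one obtains
\begin{equation*}
\partial_h t(n) \;=\; (-1)^{s_2(h)}\, (-1)^{c_2(n,h)}.
\end{equation*}
Thus, up to a global sign, $\partial_h t$ is the carry-parity sequence $n \mapsto (-1)^{c_2(n,h)}$, which is itself $2$-automatic with complexity controlled by $\log_2 h$. An analogous formula expresses $\partial_h r$ as a sign times a $2$-automatic sequence governed by interactions between consecutive digit pairs of $n$ and the binary expansion of $h$. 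The induction therefore amounts to a uniform polynomial bound on the $U^d[N]$ norms of this parametrized family of automatic sequences.

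The main obstacle is precisely this uniformity in $h$. I would handle it by stratifying the $h$'s dyadically: for $h \in [2^k, 2^{k+1})$, the carry sequence is block-decomposable along the base-$2$ digits of $n$, producing an approximate $2^k$-periodic structure on which one can apply Gelfond--Mauduit--Rivat-style exponential sum estimates scale by scale. Combining these with the inductive hypothesis at level $d$ applied block by block yields $\|\partial_h f\|_{U^d[N]} \ll_d N^{-c_d}$ uniformly for $h$ outside an exceptional set of size $O(N^{1-c'})$; averaging via the recurrence above then produces $\|f\|_{U^{d+1}[N]} = O(N^{-c_{d+1}})$, with an explicitly decreasing sequence of positive exponents $c_d$ obtained by chasing the losses through the induction.
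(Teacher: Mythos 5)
The paper does not prove this statement at all: it is quoted, with attribution, from \cite{Konieczny}, so your proposal has to be measured against Konieczny's argument. That argument is elementary and structurally quite different from yours: it does not induct on $d$ via the Cauchy--Schwarz/derivative recurrence, nor does it use Fourier uniformity as a base case. Instead it exploits the exact $2$-automatic self-similarity of the two sequences ($t(2n)=t(n)$, $t(2n+1)=-t(n)$, and $r(2n)=r(n)$, $r(2n+1)=(-1)^n r(n)$): splitting the Gowers average over parallelepipeds in $[2N]$ according to the parities of the cube variables turns the $U^d$-box average at scale $2N$ into a weighted combination of finitely many correlation quantities at scale $N$, and the resulting recursive system is shown to contract strictly under doubling, which yields the $O(N^{-c})$ bound directly for each fixed $d$, with explicit exponents.

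Your proposal, by contrast, has a genuine gap at the inductive step, and it sits exactly where the theorem's difficulty lives. The inductive hypothesis you carry is a bound on $\|t\|_{U^d[N]}$ and $\|r\|_{U^d[N]}$, but the recurrence demands bounds on $\|\partial_h t\|_{U^d[N]}$ uniformly in $h$, and, as your own (correct) computation shows, $\partial_h t(n)=(-1)^{s_2(h)}(-1)^{c_2(n,h)}$ is a carry-parity sequence, not a member of $\{t,r\}$; applying ``the inductive hypothesis at level $d$'' to it is therefore unjustified. Worse, the carry-parity family is not closed under further differentiation: $\partial_{h'}\partial_h t(n)=(-1)^{c_2(n,h)+c_2(n+h',h)}$ is a product of two carry parities at shifted arguments, so iterating the scheme forces you into a class of sequences of growing complexity. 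To close the induction you would need to formulate and prove the theorem for an explicit $h$-parametrized class closed under $\partial_h$, with decay uniform in all parameters; since $h$ ranges up to $N$, the natural automata computing $(-1)^{c_2(n,h)}$ involve $\Theta(\log N)$ scales, and your sentence about dyadic stratification with ``Gelfond--Mauduit--Rivat-style estimates scale by scale'' is precisely the unproven hard step --- per-scale losses compound over $\log N$ scales and, absent a genuinely uniform mechanism, erase the polynomial exponent. The base case is essentially fine (modulo the attribution: the square-root cancellation $\sup_\alpha|\sum_{n<N}r(n)e^{2\pi i\alpha n}|=O(\sqrt{N})$ is classical, due to Shapiro and Rudin, while Gelfond's bound handles $t$; also the interval-norm identity you state needs the usual $\1_{[N]}$-normalization care), but as written the inductive step asserts, rather than proves, the key uniform estimate, and that estimate is the content of the theorem.
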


We thus have
\begin{Cor} The Thue-Morse and Rudin-Shapiro sequences are a higher order oscillating sequences.
\end{Cor}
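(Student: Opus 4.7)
The plan is to verify the two defining conditions of a higher order oscillating sequence directly for $c_n = t_n$ and $c_n = r_n$, using the quantitative Gowers-norm decay supplied by the cited theorem of Konieczny together with the generalized von Neumann type inequality \eqref{vander}.

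First I would dispose of the growth condition \eqref{l1}. Since $|t_n| = 1$ and $|r_n| = 1$ for all $n$, we have $\sum_{n=1}^N |c_n|^\lambda = N$ for every $\lambda \geq 1$, so \eqref{l1} holds trivially with equality. The entire substance of the corollary is therefore condition \eqref{l2}, which must be verified for every $k \geq 1$ and every real polynomial $P$ of degree at most $k$.

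The key step is to invoke the inequality \eqref{vander}, which in the finite setting $\{1,\dots,N\}$ takes the form of the standard generalized von Neumann bound: for some absolute constant $C_k$ and every $1$-bounded function $f$ on $[N]$ and every real polynomial $P$ with $\deg P \leq k$,
\begin{equation*}
\Big|\frac{1}{N}\sum_{n=1}^{N} f(n)\, e^{2\pi i P(n)}\Big| \leq C_k\, \|f\|_{U^{k+1}[N]}.
\end{equation*}
This is precisely the adaptation of \eqref{vander} from a locally compact abelian group $G$ to the interval $[N]$, and it follows from the definition of $\|\cdot\|_{U^d[N]}$ given in the paper together with iterated Cauchy--Bunyakovskii--Gowers--Schwarz (Proposition \ref{Tao}) applied to the van der Corput-type differencing operators $\partial_h$ that successively eliminate the polynomial phase.

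Applying this with $f = t$ or $f = r$ and invoking the cited theorem of Konieczny, there exists $c = c(k+1) > 0$ such that
\begin{equation*}
\Big|\frac{1}{N}\sum_{n=1}^{N} c_n\, e^{2\pi i P(n)}\Big| \leq C_k\, O(N^{-c}) \tend{N}{+\infty} 0,
\end{equation*}
uniformly in $P$ of degree at most $k$. This establishes \eqref{l2} at every order $k$, proving that both sequences are higher order oscillating.

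The main conceptual obstacle is really absorbed into the black-box theorem of Konieczny; the only routine point to pin down carefully is the passage from the group-theoretic inequality \eqref{vander} (stated for $G$ with its Haar measure) to the interval version on $[N]$, which requires matching the normalization of $\|\cdot\|_{U^d[N]}$ against the indicator denominator $\|\1_{[N]}\|_{U^d(\Z/2^d N\Z)}$. Since this is a standard step in the Green--Tao--Ziegler formalism already adopted earlier in the paper, no additional ingredients are needed.
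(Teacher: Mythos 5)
Your proof is correct and follows essentially the same route as the paper, which derives the corollary by combining Konieczny's polynomial decay of the Gowers norms with the fact (stated as the paper's final theorem, via \eqref{vander} and iterated van der Corput) that vanishing Gowers norms of order $k$ imply oscillation of order $k$. Your indexing $\deg P\leq k \Rightarrow$ control by $\|\cdot\|_{U^{k+1}[N]}$ is in fact the more carefully normalized version of \eqref{vander}, and since Konieczny's bound holds for every order this makes no difference to the conclusion.
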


We are now able to outline the proof of our main result \ref{Mainofmain}.\\

Consider the dynamical system generated by the Thue-Morse sequence or Rudin-Shapiro sequence and denote it respectively by $(X_t,S_t)$ and 
$(X_r,S_r)$.
Therefore, obviously, there is a continuous function $f \in C(X_i)$ and a point $y \in X_i$, $i=t,r$ such that 
$$\lim_{N \longrightarrow +\infty} \frac{1}{N}\sum_{n=1}^{N}f(S_i^ny) i(n)>0,~~~ for~~~i=t,r.$$

Notice that Sarnak's conjecture holds for the Thue-Morse sequence and Rudin-Shapiro sequence \cite{Tao-r},
\cite{Ab-Ka-Le}, \cite{Fe-Ku-Le-Ma},\cite{Ve} .\\

We end this paper by pointing out that, by applying inductively the standard \linebreak van der corput lemma \cite[p.25]{NK} or 
\eqref{vander}, 
on can easily obtain the following. 
\begin{thm} Assume that the Gowers norm of order $k$ of the bounded sequence $(c_n)$ is zero. 
Then, $(c_n)$ is an oscillating sequence of order $k$.
\end{thm}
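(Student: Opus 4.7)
The plan is to establish the following generalized von Neumann--type inequality: for any bounded sequence $(c_n)$ and any real polynomial $P$ of degree at most $k$,
\begin{equation*}
\Big|\frac{1}{N}\sum_{n=1}^{N}c_n e^{2\pi i P(n)}\Big| \; \lesssim \; \|c\|_{U^k[N]} \; + \; o_N(1),
\end{equation*}
uniformly in $P$. Once this is in hand, the hypothesis $\|c\|_{U^k[N]} \to 0$ immediately yields \eqref{l2} for every polynomial $P \in \R_k[z]$, so $(c_n)$ is oscillating of order $k$.

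To prove the inequality, I would induct on $d = \deg P$, applying van der Corput's lemma (\cite[Lemma 3.1, p.25]{NK}) at each step. Writing $\Delta_h P(n) = P(n+h) - P(n)$, van der Corput gives, for $1 \le H \le N$,
\begin{equation*}
\Big|\frac{1}{N}\sum_n c_n e^{2\pi i P(n)}\Big|^2 \le \frac{C}{H}\sum_{|h|<H}\Big|\frac{1}{N}\sum_n c_{n+h}\,\overline{c_n}\, e^{2\pi i \Delta_h P(n)}\Big| + O\!\Big(\frac{H}{N}\Big),
\end{equation*}
and $\Delta_h P$ has degree at most $d-1$ in $n$. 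Iterating this $d$ times reduces the phase to a function of $h_1,\dots,h_d$ alone (constant in $n$) of modulus one, and yields
\begin{equation*}
\Big|\frac{1}{N}\sum_n c_n e^{2\pi i P(n)}\Big|^{2^d} \le \frac{1}{H^d}\sum_{h_1,\dots,h_d}\Big|\frac{1}{N}\sum_n \prod_{\omega\in\{0,1\}^d} \Cc^{|\omega|}\, c_{n+\omega\cdot h}\Big| + o(1),
\end{equation*}
as $N\to\infty$ and $H\to\infty$ with $H = o(N)$. The right-hand side is an $\ell^1$-average over $h$ of partial Gowers-type inner products; via one further Cauchy--Schwarz step (equivalently, Proposition \ref{Tao}) and the monotonicity of Gowers norms in their index, it is bounded by $\|c\|_{U^k[N]}^{2^d}$ for any $d\le k$.

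The main obstacle lies in the bookkeeping of the errors: each van der Corput step contributes an $O(H/N)$ tail, and one must choose a slowly growing window $H = H(N)\to\infty$ with $H = o(N)$ so that the accumulated errors stay $o(1)$. A second subtle point is the passage from the $\ell^1$-average of moduli that van der Corput naturally produces to an honest Gowers norm: this requires an additional Cauchy--Schwarz manipulation and the identification of $\|\cdot\|_{U^k[N]}$ with the normalized Gowers norm of $\widetilde c = c\cdot \1_{[N]}$ on $\Z/2^k N\Z$. Alternatively, one may invoke the inequality \eqref{vander} directly on $G = \Z/2^k N\Z$, observing that $n\mapsto P(n)\bmod 1$ is a polynomial map of degree at most $k$ in the discrete-derivative sense $\partial_{h_1}\cdots\partial_{h_{k+1}}(P)\equiv 0$; this yields the desired bound straight from the definition of $\|\cdot\|_{U^k[N]}$.
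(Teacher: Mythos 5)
Your route is the one the paper itself prescribes: its entire proof is the remark that the theorem follows ``by applying inductively the standard van der Corput lemma or \eqref{vander}'', and you carry out precisely that induction (and also sketch the alternative via \eqref{vander}), so there is no methodological divergence to report. There is, however, a genuine index gap in your final step --- one that, in fairness, is already latent in the paper's own inequality \eqref{vander}. After $d$ applications of van der Corput the phase is constant in $n$, and what survives is an $\ell^1$-average over $(h_1,\dots,h_d)$ of $\big|\frac{1}{N}\sum_n \partial_{h_1}\cdots\partial_{h_d}c(n)\big|$. The extra Cauchy--Schwarz you invoke converts this into the $\ell^2$-average $\big(\E_{h}\big|\frac{1}{N}\sum_n \partial_{h_1}\cdots\partial_{h_d}c(n)\big|^2\big)^{1/2}$, and squaring the inner modulus introduces one more derivative: this quantity is comparable to $\|c\|_{U^{d+1}[N]}^{2^{d}}$, not to $\|c\|_{U^{d}[N]}^{2^{d}}$. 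The generalized von Neumann inequality therefore reads
\begin{equation*}
\Big|\frac{1}{N}\sum_{n=1}^{N}c_n e^{2\pi i P(n)}\Big|\;\lesssim\;\|c\|_{U^{d+1}[N]}+o_N(1)\qquad(\deg P\le d),
\end{equation*}
with exponent $d+1$; since the Gowers norms are increasing in their index, your claimed bound by $\|c\|_{U^{k}[N]}$ holds for $d\le k-1$ but fails at the top degree $d=k$.

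That this is not a mere bookkeeping quibble is shown already by $k=1$: take $c_n=e^{-2\pi i\alpha n}$ with $\alpha$ irrational. Then $\|c\|_{U^{1}[N]}=\big|\frac{1}{N}\sum_{n\le N}e^{-2\pi i\alpha n}\big|\to 0$, yet $\frac{1}{N}\sum_{n\le N}c_n e^{2\pi i\alpha n}=1$, so $(c_n)$ is not oscillating of order $1$; the same phenomenon occurs in every degree ($e^{-2\pi i\alpha n^{k}}$ has $k$-th derivative phase $e^{-2\pi i\alpha k!h_1\cdots h_k}$, which averages out, while the correlation with $e^{2\pi i\alpha n^{k}}$ is $1$). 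This also shows that \eqref{vander} as printed is off by one ($\|f\|_{U^{1}}=|\int f|$ cannot control Fourier coefficients). Both your argument and the theorem become correct verbatim once ``Gowers norm of order $k$'' is read as $\|\cdot\|_{U^{k+1}[N]}$ (the usual degree-$k$ versus $U^{k+1}$ convention), or once the conclusion is weakened to oscillation of order $k-1$; either reading still yields the corollary on the Thue--Morse and Rudin--Shapiro sequences, since there all Gowers norms vanish. A secondary point worth tidying: your van der Corput parameters range over $[H]$ with $H=o(N)$, whereas $\|\cdot\|_{U^{d+1}[N]}$ averages the $h_i$ over a range comparable to $N$; the standard remedy is to run the whole computation on $\Z/2^{d+1}N\Z$ with $\widetilde c=c\cdot\1_{[N]}$, where van der Corput with $H\sim N$ produces the Gowers inner product exactly and without error terms.
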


This allows us to ask the following question.
\begin{Question}[\cite{elabalYe}] 
 Do we have that for any multiplicative function $(a(n))$ with small Gowers norm,  
 for any dynamical flow on a compact set $(X,T)$ with topological entropy zero, for any continuous function, for all $x \in X$,
\[
\frac1{N}\sum_{n=1}^{N}a_nf(T^nx) \tend{N}{+\infty}0?
\]
\end{Question}
Notice that by our assumption, it is obvious that for any nilsystem $(X,T),$  for any continuous function $f$, for any $x \in X$, we have
\[
\frac1{N}\sum_{n=1}^{N}a_nf(T^nx) \tend{N}{+\infty}0.
\]
Let us further point out that by the recent result of L. Matthiesen \cite{Lilian} there is a class of not necessarily bounded multiplicative 
functions with a small Gowers norms.
\begin{Question}[$\beta$-shift, $\times p$ maps and the M\"{o}bius-Liouville randomness law.]\label{Beta}
One may ask further on the validity of the  M\"{o}bius-Liouville randomness law for the map $T_px=px$ mod $1$, $p$ prime and for the beta-shift, that 
is, do we have 
$$\frac{1}{N}\sum_{n=1}^{N}\mob(n) f(T_p^nx) \tend{N}{+\infty}0, ~~~~~~~(\star)$$
$$\frac{1}{N}\sum_{n=1}^{N}\mob(n) f(T_\beta^nx) \tend{N}{+\infty}0,~~~\beta>1 ~~~~~~~(\star \star)$$
for $f$ a continuous function and $x \in [0,1)$?
\end{Question}
Let us notice that if we consider the doubling map $x \mapsto 2x$. Then, it is easy to see that $(\star)$ holds for a dense periodic points.
We furhter notice that the general case is related to the problem of mutiplicative order function $\ell(n)$. Whence, according 
to the main result in \cite{Banks}, for almost all integers $q$, for any $p$, we have 
$$\frac{1}{N}\sum_{n=1}^{N}\mob(n) \exp\Big({2 \pi i 2^n p \diagup q}\Big) \tend{N}{+\infty}0.$$
For a nice account on the  mutiplicative order problem we refer to \cite{Moree}.\\

Let us point out also that it is easy to see that for almost all $\beta>1$, $(\star \star)$ holds for a dense points.  
On the other hand it is well known that if $\beta$ is a simple Parry number then the map $T_\beta$ is measure theorically isomorphic 
to the subshift of finite type (SFT). But, by the recent result of D. Karagulyan \cite{Davit}, the SFT are not orthogonal to 
the M\"{o}bius function. We further have that if 
$\beta$ is a Parry number then the map $T_\beta$ is sofic, that is, a factor of SFT. But again by D. Karagulyan's result  \cite{Davit}
the M\"{o}bius function is not orthogonal to the sofic symbolic dynamical systems. However, this does not answer our question.\\

Finally, in a very recent work under progress, P. Kurlberg and the author proved that the answer to the question \ref{Beta} is negative. They further established that
the M\"{o}bius-Liouville randomness law does not hold for any expansive map \cite{AK}. Precisely, P. Kurlberg and the author proved that 
for given an integer $b \geq 2$,
  there exists $x \in [0,1)$ and $c > 0$ such that
$$
\sum_{n \le N} \lambda(n)  \sin( 2 \pi  b^{n } x)  \geq c \cdot N
$$
for {\em all} sufficiently large $N$.\\
This later result combined with Bourgain's result \cite{Sarnak} on the existence of dynamical system for which the M\"{o}bius-Liouville randomness law hold (see also \cite{DS}) allows us to ask the following
question.
\begin{Question} For any $\epsilon>0$, can one construct a dynamical system with topological entropy $\epsilon$ for which the 
 M\"{o}bius-Liouville randomness law hold.
\end{Question}

\begin{thank}
The author wishes to express his thanks to Fran\c cois Parreau 
for many stimulating conversations on the subject and his sustained interest and encouragement. He further wishes 
to express his thanks to Wen Huang for many interesting email exchanges on the subject related to this work. He would like also
to thanks Igor Shparlinski and Ping Xi for their interest and valuable comments. 
\end{thank}

\end{document}